\numberwithin{equation}{section}
\newcommand{\R}{\mathbb{R}}
\newcommand{\N}{\mathbb{N}}
\renewcommand{\P}{\mathbb{P}}
\newcommand{\E}{\mathbb{E}}
\newcommand{\F}{\mathcal{F}}
\newcommand{\cF}{\mathcal{F}}
\newcommand{\e}{\varepsilon}
\newcommand{\1}{\mathbbm{1}}
\newtheorem{Theorem}{Theorem}[section]
\newtheorem{Proposition}[Theorem]{Proposition}
\newtheorem{Corollary}[Theorem]{Corollary}
\newtheorem{Lemma}[Theorem]{Lemma}
\newtheorem{Remark}[Theorem]{Remark}
\newtheorem{Example}[Theorem]{Example}
\begin{document}

\title[Limit theorems for time averages of CBI processes]{Limit theorems for time averages of continuous-state branching processes with immigration}

\author{Mariem Abdellatif}
\address[Mariem Abdellatif]{School of Mathematics and Natural Sciences\\ University of Wuppertal, Germany}
\email[Mariem Abdellatif]{abdellatif@uni-wuppertal.de}

\author{Martin Friesen}
\address[Martin Friesen]{School of Mathematical Sciences\\
Dublin City University\\ Glasnevin, Dublin 9, Ireland}
\email[Martin Friesen]{martin.friesen@dcu.ie}

\author{Peter Kuchling}
\address[Peter Kuchling]{School of Mathematics and Natural Sciences\\ University of Wuppertal, Germany}
\email[Peter Kuchling]{kuchling@uni-wuppertal.de}

\author{Barbara R\"udiger}
\address[Barbara R\"udiger]{School of Mathematics and Natural Sciences\\ University of Wuppertal, Germany}
\email[Barbara R\"udiger]{ruediger@uni-wuppertal.de}

\date{\today}

\subjclass[2020]{Primary 60F05, 60F10; Secondary 60J25, 60J76 60J80}
\keywords{}

\begin{abstract}
 In this work we investigate limit theorems for the time-averaged process $\left( \frac{1}{t}\int_0^t X_s^x ds \right)_{t \geq 0}$ where $X^x$ is a subcritical continuous-state branching process with immigration starting in $x \geq 0$. Under a second moment condition on the branching and immigration measures we first prove the law of large numbers in $L^2$ and afterward establish the central limit theorem. Assuming additionally that the big jumps of the branching and immigration measures have finite exponential moments of some order, we prove in our main result the large deviation principle and provide a semi-explicit expression for the good rate function in terms of the branching and immigration mechanisms. Our methods are deeply based on a detailed study of the corresponding generalized Riccati equation and related exponential moments of the time-averaged process. 
\end{abstract}
\keywords{CBI-process; invariant measures; law-of-large numbers; central limit theorem; large deviations; Laplace transform}

\maketitle

\allowdisplaybreaks

\section{Introduction}

\subsection{General introduction}
One-dimensional continuous-state branching processes with immigration (CBI processes) form an important class of Markov processes on the state space $\R_+=[0,\infty)$. Such processes have been first introduced and studied by Ji\v{r}ina \cite{jirina}, Lamperti \cite{lamperti}, Silberstein \cite{silberstein} and Watanabe \cite{watanabe}. For an up-to-date overview and additional references on CBI processes we refer to Li \cite{li_lecture, li_book}. Since CBI processes form a particular class of affine processes, they are nowadays widely used in Mathematical Finance for the modelling of default intensities, stochastic interest rates, and stochastic volatility (see \cite{dfs, MR3363174, JMS17}). In mathematical terms, a CBI process on $\R_+$ is a time-homogeneous Markov process whose transition probability kernel $p_t(x,dy)$ satisfies for $x,t \geq 0$ the affine-transformation formula
\begin{align}\label{eq: affine formula}
 \int_{\R_+} e^{\lambda z} p_t(x,dz) = \exp\left(x v(t,\lambda) + \int_0^t F(v(s,\lambda))ds \right), \qquad \lambda \leq 0.
\end{align}
Here $v$ denotes the unique solution of the generalized Riccati equation
\[
 v'(t,\lambda) = R(v(t,\lambda)), \qquad v(0,\lambda) = \lambda
\]
while $F,R$ are of L\'evy-Khinchine form
\begin{align*}
 F(u) &= bu + \int_0^{\infty}\left( e^{u z} - 1 \right)\nu(dz),
 \\ R(u) &= \beta u + \frac{\sigma^2}{2}u^2 + \int_0^{\infty}\left( e^{u z} - 1 - uz \right)\mu(dz)
\end{align*}
with $b, \sigma \geq 0$, $\beta \in \R$, and $\nu,\mu$ are $\sigma$-finite measures on $[0,\infty)$ with $\nu(\{0\}) = \mu(\{0\}) = 0$ satisfying the integrability conditions
\[
 \int_0^{\infty}(1 \wedge z) \nu(dz) + \int_0^{\infty}(z \wedge z^2)\mu(dz) < \infty.
\]
It can be shown that such a CBI process is conservative and that its transition semigroup obtained from \eqref{eq: affine formula} is actually $C_0$-Feller, see e.g. \cite{kawazu_watanabe}. 

Following \cite[Chapter 8]{li_lecture} (see also the references therein), each CBI process also admits a pathwise construction in terms of a strong solution of a SDE with a Brownian motion and Poisson random measures as driving noise. Namely, let $(\Omega,\cF,(\cF_t)_{t\geq 0},\P)$ be a probability space rich enough to support an $(\F_t)_{t \geq 0}$-Brownian motion, an $(\F_t)_{t \geq 0}$-Poisson random measure $N_{\nu}$ on $(0,\infty)^{2}$ with intensity $ ds \nu(dz)$, and an $(\F_t)_{t \geq 0}$-Possion random measure $N_{\mu}$ on $(0,\infty)^{3}$ with intensity $ds \mu(dz)du$. Then the unique strong solution of
\begin{align}\label{SDE}
 X^x_{t} &= x + \int_0^t (b+\beta X^x_{s})ds+ \sigma \int_0^t\sqrt{X^x_{s}} dB_{s} 
 \\ \notag &\qquad \qquad + \int_0^t\int_{0}^{\infty} z N_{\nu}(ds,dz) + \int_0^t\int_{0}^{\infty}\int_{0}^{X^x_{s}}z \widetilde{N}_{\mu}(ds,dz,du)
\end{align}
with initial condition $x \geq 0$ satisfies \eqref{eq: affine formula} and hence determines a CBI process with parameters $(b,\beta,\sigma,\mu,\nu)$.
Here $\widetilde{N}_{\mu}(dt,dz,du) = N_{\mu}(dt,dz,du) - dt \mu(dz)du$ denotes the corresponding compensated Poisson random measure.

\subsection{Discussion of results and examples}

Let $X^x = (X_t^x)_{t \geq 0}$ be the CBI process with parameters $(b,\beta, \sigma,\mu,\nu)$ constructed from \eqref{SDE}. We call the CBI process \textit{subcritical}, if its drift satisfies $\beta < 0$.
It follows from \cite{pinsky1972} (see also \cite[Corollary 3.21]{li_book}) that $X^x$ has a unique invariant measure $\pi$ and that $X_t^x \Longrightarrow \pi$ weakly as $t \to \infty$ provided that it is subcritical and the big jumps of $\nu$ satisfy the integrability condition $\int_1^{\infty}\log(z)\nu(dz) < \infty$. Moreover, according to \cite{FJR2020b}, this convergence also holds in the Wasserstein-1 distance, provided that $\nu$ satisfies the stronger moment condition $\int_1^{\infty}z \nu(dz) < \infty$. Thus, using \cite{sandric2017}, we conclude that the time averaged process 
\begin{align}\label{eq: time-averaged process}
 Y^x_t := \frac{1}{t}\int_0^t X^x_s ds
\end{align}
converges in $L^1(\Omega, \F,\P)$ to the space-averages, i.e.
\begin{align}\label{eq: LLN L1}
 Y^x_t \longrightarrow \int_0^{\infty}y \pi(dy) = (-\beta)^{-1}\left(b + \int_0^{\infty}z \nu(dz)\right) =: m.
\end{align}
In this work we first strengthen \eqref{eq: LLN L1} to convergence in $L^2$, provided that $\nu,\mu$ have finite second moments for the big jumps. 
\begin{Theorem}[Law of large numbers in $L^2$]\label{thm: lln}
 Let $X^x$ be the subcritical CBI process with parameters $(b,\beta,\sigma, \nu,\mu)$ and $x \geq 0$. Suppose that
 \begin{align}\label{eq: second moments}
  \int_1^{\infty}z^2 \nu(dz) + \int_1^{\infty}z^2 \mu(dz) < \infty.
 \end{align}
 Then the law of large numbers also holds in $L^2$, i.e.
 \[
  \lim_{t \to \infty}\E\left[ \left|\frac{1}{t}\int_0^t X_s^x ds - m \right|^2 \right] = 0.
 \]
\end{Theorem}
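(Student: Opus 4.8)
The plan is to split
\[
 \E\!\left[\left|Y_t^x-m\right|^2\right]=\left(\E[Y_t^x]-m\right)^2+\mathrm{Var}\!\left(Y_t^x\right)
\]
and show that each summand vanishes as $t\to\infty$; the second moment hypothesis \eqref{eq: second moments} is needed only to bound the second moments of $X^x$ uniformly in time. First observe that \eqref{eq: second moments} together with the standing integrability conditions on $\nu,\mu$ gives $\bar\nu:=\int_0^\infty z\,\nu(dz)<\infty$, $c_1:=\int_0^\infty z^2\,\nu(dz)<\infty$ and $c_\mu:=\int_0^\infty z^2\,\mu(dz)<\infty$. Taking expectations in \eqref{SDE}, the Brownian integral and the $\widetilde N_\mu$-integral have mean zero while $\E\int_0^t\!\int_0^\infty z\,N_\nu(ds,dz)=t\bar\nu$, so $u(t):=\E[X_t^x]$ solves the linear ODE $u'(t)=(b+\bar\nu)+\beta u(t)$ with $u(0)=x$, whence $u(t)=m+e^{\beta t}(x-m)$ with $m$ as in \eqref{eq: LLN L1}. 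Consequently $\E[Y_t^x]-m=t^{-1}(x-m)\int_0^t e^{\beta s}\,ds=O(t^{-1})$, so the first summand above is $O(t^{-2})$.

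For the variance, the key step is a uniform-in-$t$ bound on $w(t):=\E[(X_t^x)^2]$. A standard localization argument --- stop \eqref{SDE} at $\tau_n:=\inf\{t\ge 0:X_t^x\ge n\}$, apply It\^o's formula to $(X^x_{t\wedge\tau_n})^2$, take expectations so that the local martingale terms disappear, and let $n\to\infty$ by monotone convergence --- shows, using the finiteness of $c_1$ and $c_\mu$, that $w(t)<\infty$ and
\[
 w(t)=x^2+2\beta\int_0^t w(s)\,ds+\int_0^t\bigl(c_2\,u(s)+c_1\bigr)\,ds,\qquad c_2:=2(b+\bar\nu)+\sigma^2+c_\mu .
\]
Since $\beta<0$, the function $u$ is bounded, hence so is $s\mapsto c_2u(s)+c_1$; solving the linear ODE yields $w(t)=e^{2\beta t}x^2+\int_0^t e^{2\beta(t-s)}(c_2u(s)+c_1)\,ds$, which is bounded uniformly in $t\ge 0$. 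In particular $V:=\sup_{t\ge 0}\mathrm{Var}(X_t^x)<\infty$. (Alternatively, this bound can be read off from \eqref{eq: affine formula} by differentiating twice in $\lambda$ at $\lambda=0$.)

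It remains to combine these estimates. By the Markov property of $X^x$ and the conditional analogue of the first moment identity, for $0\le s\le r$ one has $\E[X_r^x\mid\cF_s]=m+e^{\beta(r-s)}(X_s^x-m)$, and therefore
\[
 \mathrm{Cov}(X_s^x,X_r^x)=e^{\beta(r-s)}\,\mathrm{Var}(X_s^x),\qquad 0\le s\le r .
\]
Using the symmetry of the covariance in $(s,r)$ together with the bound $\int_0^r e^{\beta(r-s)}\,ds\le(-\beta)^{-1}$ and $\mathrm{Var}(X_s^x)\le V$,
\[
 \mathrm{Var}(Y_t^x)=\frac{1}{t^2}\int_0^t\!\!\int_0^t\mathrm{Cov}(X_s^x,X_r^x)\,ds\,dr=\frac{2}{t^2}\int_0^t\!\!\int_0^r e^{\beta(r-s)}\mathrm{Var}(X_s^x)\,ds\,dr\le\frac{2V}{(-\beta)\,t}\longrightarrow 0
\]
as $t\to\infty$, which together with $(\E[Y_t^x]-m)^2=O(t^{-2})$ gives the claim.

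The main difficulty I anticipate lies in the uniform second moment bound: one must justify carefully that the localization procedure produces the stated integral equation for $w$, the delicate point being that the overshoot of $X^x$ over level $n$ at time $\tau_n$ is square-integrable precisely because $\int_1^\infty z^2\,\nu(dz)<\infty$. Once this is in place, the remaining ingredients --- the first moment ODE, the conditional mean identity from the Markov property, and the final double-integral estimate --- are elementary.
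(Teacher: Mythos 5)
Your proof is correct. It rests on the same two pillars as the paper's argument --- the conditional first-moment formula $\E[X_r^x\mid\F_s]=m+e^{\beta(r-s)}(X_s^x-m)$ (the paper's Lemma C.1) and a uniform-in-time bound on $\E[(X_t^x)^2]$ (the paper's Proposition C.2) --- but organizes the computation differently. The paper expands $\E[(Y_t^x-m)^2]$ directly and evaluates $\frac{1}{t^2}\E[(\int_0^t X_s\,ds)^2]$ as a sum of three double integrals over $\{s_1<s_2\}$, estimating each one; you instead use the bias--variance split together with the exact covariance identity $\mathrm{Cov}(X_s^x,X_r^x)=e^{\beta(r-s)}\mathrm{Var}(X_s^x)$, which collapses the paper's three-term bookkeeping into the single estimate $\mathrm{Var}(Y_t^x)\le 2V/(|\beta|t)$. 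This is a modest but genuine streamlining. The other difference is how the uniform second-moment bound is obtained: the paper uses the variation-of-constants (stochastic convolution) representation of $X^x$ plus the It\^o isometry, which sidesteps the localization issues you flag, whereas your It\^o-formula-plus-stopping route requires the care you correctly anticipate (the overshoot and the martingale property of the stopped compensated integrals); your suggested alternative of differentiating the affine formula \eqref{eq: affine formula} twice at $\lambda=0$ is also a clean way to get the same bound. Either way the bound holds under \eqref{eq: second moments}, and your argument goes through.
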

A proof of this statement is given in the first part of Section 2. It follows the arguments given in \cite{zhu}, but now adjusted to cover the general class of one-dimensional subcritical CBI processes. Under slightly stronger assumptions on the branching mechanism $R$ one could also obtain a.s. convergence, see \cite{GZ18, LM15, FJKR19}. 

In view of the above, we are now interested in the typical fluctuations of $(Y_t^x)_{t \geq 0}$ around its limit $m$. More precisely, we prove the functional central limit theorem under the second moment condition \eqref{eq: second moments}.
\begin{Theorem}[Functional central limit theorem]\label{thm: clt}
 Let $X^x$ be the subcritical CBI process with parameters $(b,\beta,\sigma, \nu,\mu)$ and $x \geq 0$. Suppose that \eqref{eq: second moments} holds. Then 
 \[
  \sqrt{n}\left(\frac{1}{n}\int_0^{nt} X_s^x ds - mt \right) \Longrightarrow \rho W_t, \qquad t \in [0,1]
 \]
 in law where $W_t$ is a standard Brownian motion and $\rho$ is given by
 \[
  \rho^2 =  \frac{m }{\beta^2}\left( \sigma^2 + \int_0^{\infty}z^2 \mu(dz) \right) + \frac{1}{\beta^2}\int_0^{\infty} z^2 \nu(dz) 
  = \frac{R''(0)m + F''(0)}{\beta^2}.
 \]
\end{Theorem}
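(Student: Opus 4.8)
The natural strategy is the martingale (Poisson-equation) method. The key simplification here is that the extended generator $\mathcal L$ of the CBI process acts on the identity by $\mathcal L(x) = b + \beta x + \int_0^\infty z\,\nu(\dm z) = \beta(x-m)$, so that $g(x)=x/\beta$ solves the Poisson equation $\mathcal L g = x-m$ \emph{exactly}. Equivalently, compensating the jump measure $N_\nu$ in \eqref{SDE} gives
\begin{align*}
 X_t^x - x &= \beta\int_0^t (X_s^x - m)\,\dm s + \mathcal M_t,\\
 \mathcal M_t &:= \sigma\int_0^t\sqrt{X_s^x}\,\dm B_s + \int_0^t\int_0^\infty z\,\widetilde N_\nu(\dm s,\dm z) + \int_0^t\int_0^\infty\int_0^{X_s^x} z\,\widetilde N_\mu(\dm s,\dm z,\dm u),
\end{align*}
where, under \eqref{eq: second moments} and the uniform moment bounds below, $\mathcal M$ is a square-integrable martingale with predictable quadratic variation $\langle\mathcal M\rangle_t = R''(0)\int_0^t X_s^x\,\dm s + F''(0)\,t$, using $R''(0)=\sigma^2+\int_0^\infty z^2\,\mu(\dm z)$ and $F''(0)=\int_0^\infty z^2\,\nu(\dm z)$. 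Writing $Z_n(t) := \frac{1}{\sqrt n}\int_0^{nt}(X_s^x-m)\,\dm s$, which is precisely the quantity in the statement, and rearranging the above identity at time $nt$ after dividing by $\beta\sqrt n$, one obtains
\begin{align*}
 Z_n(t) &= M^{(n)}(t) + \frac{1}{\beta\sqrt n}\big(X_{nt}^x - x\big),\qquad M^{(n)}(t) := -\frac{1}{\beta\sqrt n}\,\mathcal M_{nt},\\
 \langle M^{(n)}\rangle_t &= \frac{R''(0)}{\beta^2}\cdot\frac 1n\int_0^{nt}X_s^x\,\dm s + \frac{F''(0)}{\beta^2}\,t.
\end{align*}

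First I would record uniform-in-time moment estimates. Taking expectations in \eqref{SDE} gives $\E[X_t^x]=m+(x-m)e^{\beta t}$, and the analogous linear ODE for $\E[(X_t^x)^2]$ --- with leading coefficient $2\beta<0$ and forcing terms controlled by \eqref{eq: second moments} --- shows $\sup_{t\ge 0}\E[(X_t^x)^2]<\infty$ and that $\E[(X_t^x)^2]$ converges as $t\to\infty$. Two consequences: (i) $\frac{1}{\sqrt n}X_{nt}^x\to 0$ in $L^2$ for every fixed $t$, so the boundary term above is negligible in the finite-dimensional sense; (ii) by the Markov property, $\mathrm{Cov}(X_u^x,X_v^x)=e^{\beta|u-v|}\,\mathrm{Var}(X_{u\wedge v}^x)$, whence $|\mathrm{Cov}(X_u^x,X_v^x)|\le C e^{-|\beta|\,|u-v|}$.

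Next I would invoke the functional martingale central limit theorem (e.g.\ Jacod--Shiryaev or Ethier--Kurtz) for $M^{(n)}$. By Theorem~\ref{thm: lln}, $\frac 1n\int_0^{nt}X_s^x\,\dm s = t\cdot\frac{1}{nt}\int_0^{nt}X_s^x\,\dm s\to mt$ in probability for each $t$ (and, by monotonicity of $t\mapsto\int_0^{nt}X_s^x\,\dm s$ and Dini's theorem, uniformly on $[0,1]$), so $\langle M^{(n)}\rangle_t\to\rho^2 t$ with $\rho^2 = (R''(0)m+F''(0))/\beta^2$. The asymptotic negligibility of the jumps of $M^{(n)}$ follows directly from \eqref{eq: second moments}: the largest jump of $\mathcal M$ on $[0,nt]$ exceeds $\e\sqrt n$ with probability at most $\frac{t}{\e^2}\int_{z>\e\sqrt n}z^2\,\nu(\dm z) + \frac{Ct}{\e^2}\int_{z>\e\sqrt n}z^2\,\mu(\dm z)\to 0$ (for the $N_\mu$-part one also uses $\E\int_0^{nt}X_s^x\,\dm s\le Cnt$). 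Hence $M^{(n)}\Longrightarrow \rho W$ in $D([0,1])$; in particular the finite-dimensional distributions of $M^{(n)}$, and therefore of $Z_n$, converge to those of $\rho W$.

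It then remains to prove tightness of $\{Z_n\}_n$ in $D([0,1])$. Since $Z_n$ is continuous with $Z_n(t)-Z_n(s)=\frac{1}{\sqrt n}\int_{ns}^{nt}(X_u^x-m)\,\dm u$, the correlation-decay bound gives $\E[|Z_n(t)-Z_n(s)|^2]\le C(|t-s|+\tfrac1n)$; the analogous conditional estimate, obtained from the Markov property and the moment bounds, verifies Aldous's criterion, and together with tightness of $\{Z_n(t)\}$ for each fixed $t$ this yields tightness of $\{Z_n\}$. Combining tightness with the convergence of finite-dimensional distributions gives $Z_n\Longrightarrow\rho W$, and the two displayed formulas for $\rho^2$ coincide by the expressions for $R''(0),F''(0)$ above. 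I expect the main difficulty to be carrying out this last part --- control of the boundary term and tightness --- under only the second-moment hypothesis \eqref{eq: second moments}: a crude union bound over unit subintervals is too lossy, so one must exploit the exact exponential decay of correlations (and, should a genuinely uniform-in-$t$ negligibility statement for $\frac{1}{\sqrt n}X_{nt}^x$ be desired, a uniform integrability argument for $\{(X_k^x)^2\}_k$ based on $\E[(X_k^x)^2]\to\int y^2\,\pi(\dm y)$). Verifying the hypotheses of the martingale CLT --- convergence of $\langle M^{(n)}\rangle$, which rests squarely on Theorem~\ref{thm: lln}, and the Lindeberg/jump condition --- is comparatively routine.
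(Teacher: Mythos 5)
Your proposal is correct and rests on the same backbone as the paper's proof: decompose $\sqrt{n}\bigl(\tfrac1n\int_0^{nt}X_s^x\,\dm s-mt\bigr)$ via the SDE \eqref{SDE} into a boundary term $\beta^{-1}(X_{nt}^x-x)/\sqrt n$ plus a rescaled martingale, identify the limit of the bracket through the law of large numbers, and invoke the martingale central limit theorem. There are, however, two substantive differences in execution. First, you work with the \emph{predictable} quadratic variation $\langle\mathcal M\rangle_t=R''(0)\int_0^tX_s^x\,\dm s+F''(0)t$, whose convergence after rescaling follows from Theorem \ref{thm: lln} alone, at the price of having to verify a Lindeberg/jump-negligibility condition separately (which you do correctly via $\nu((\e\sqrt n,\infty))$ and $\E\!\int_0^{nt}X_s^x\,\dm s\cdot\mu((\e\sqrt n,\infty))$). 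The paper instead uses the optional bracket $[M]$, which contains the uncompensated Poisson integrals $\int z^2N_\nu$ and $\int z^2\1_{\{u\le X_{s-}\}}N_\mu$, and therefore needs the additional Lemma \ref{lemma: lln Poisson} (a law of large numbers for these Poisson integrals) to identify the limit; your route makes that lemma unnecessary. Second, you are more careful about the functional nature of the statement: the paper only shows $X_{nt}^x/\sqrt n\to0$ in probability for fixed $t$ and does not address uniformity of the remainder or tightness of the full process, whereas your covariance-decay bound $\E[|Z_n(t)-Z_n(s)|^2]\le C(|t-s|+n^{-1})$ together with Aldous's criterion closes this gap. Both approaches are valid; yours is arguably tighter on the functional convergence, while the paper's is more self-contained in its reliance on a single citation to Jacod--Shiryaev for the bracket condition.
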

The proof of this result is also given in Section 2 where using the stochastic equation \eqref{SDE} we first decompose $\sqrt{n}\left(\frac{1}{n}\int_0^{nt} X_s^x ds - mt\right) = \beta^{-1}n^{-1/2}(X_{nt}^x + x) + \beta^{-1}n^{-1/2}M_{nt}$ for some martingale $(M_t)_{t \geq 0}$. The assertion then follows from the central limit theorem for martingales (see e.g. \cite{jacod_shiryaev}), provided we can show that $[M_{nt}]/n \longrightarrow \beta^2\rho^2 t$ in probability. To prove $[M_{nt}]/n \longrightarrow \beta^2\rho^2 t$, we successively use the law of large numbers for $Y_t^x$ as well as for the stochastic integrals against the Poisson random measures $N_{\nu}, N_{\mu}$. 

Similar results have been studied in \cite{zhu} for the specific choice of L\'evy measures $\nu = c_{\nu} \delta_a$, $\mu = c_{\mu} \delta_a$ with $c_{\nu},c_{\mu} \geq 0$ and $a > 0$. Moreover, in \cite{GZ18} a similar statement was derived for general affine processes under the additional assumption that $\nu,\mu$ are probability measures satisfying a lower bound. Thus, our result complements the existing literature as being applicable to general one-dimensional CBI processes. It is worthwhile to mention that our proofs of Theorem \ref{thm: lln} and Theorem \ref{thm: clt} do not explicitly use the one-dimensional structure of the process and hence can also be extended to general multi-type CBI processes on state-space $\R_+^m$.

In our main result of this work we study the a-typical fluctuations of $Y_t^x$ around the limit $m$. The latter one is described in terms of large deviations for the sequence $(Y^x_t)_{t \geq 0}$ when $t \to \infty$. To formulate this result, let us introduce first
\begin{align*}
 \gamma_R &:= \sup \left\{\gamma\geq 0\colon\int_1^\infty e^{\gamma z}\mu(dz)<\infty \right\},
\\ \gamma_F &:= \sup \left\{\gamma \geq 0 \colon \int_1^\infty e^{\gamma z}\nu(dz)<\infty \right\}.
\end{align*}
Note that the L\'evy measures $\nu,\mu$ have finite exponential moments in the sense that
\begin{align}\label{eq: exponential moment}
 \int_1^{\infty} e^{\gamma z} \left( \nu(dz) + \mu(dz) \right) < \infty \qquad \text{ for some }\gamma>0
\end{align}
if and only if $\gamma_R, \gamma_F > 0$. 
In any case, using dominated convergence it is not difficult to see that $R$ and $F$ are analytic on $(-\infty,\gamma_R)$ and $(-\infty, \gamma_F)$. Moreover, under condition \eqref{eq: second moments} $R$ is strictly convex if and only if
\begin{align}\label{eq: non degeneracy}
 0 < R''(0) = \sigma^2 + \int_0^{\infty} z^2 \mu(dz). 
\end{align}
For such functions $R$ we have the following auxiliary result.
\begin{Lemma}\label{minimum_u_c}
 Let $X^x$ be the subcritical CBI process with parameters $(b, \beta, \sigma, \nu,\mu)$, $x \geq 0$, satisfying \eqref{eq: non degeneracy} and $\gamma_R > 0$. Then $R$ has a unique global minimum $u_c$ which satisfies $u_c \in (0,\gamma_R]$ for $\gamma_R < \infty$ and $u_c \in (0,\infty)$ for $\gamma_R = + \infty$. In any case, letting 
 \[
  \lambda_R := - R(u_c)
 \]
 we find that $\lambda_R \in (0, \infty)$.
\end{Lemma}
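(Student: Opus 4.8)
The plan is to exploit that $R$ is smooth and strictly convex on $(-\infty,\gamma_R)$ together with a careful analysis of its behaviour at the two endpoints $-\infty$ and $\gamma_R$. Recall that $R(u) = \beta u + \frac{\sigma^2}{2}u^2 + \int_0^\infty(e^{uz}-1-uz)\mu(dz)$, that $R(0) = 0$, and that $R'(0) = \beta < 0$ by subcriticality; strict convexity is equivalent to \eqref{eq: non degeneracy}, which we assume. Since $R$ is $C^1$ and strictly convex on the open interval $(-\infty,\gamma_R)$, its derivative $R'$ is strictly increasing there, so $R$ has at most one critical point, and any such point is the unique global minimum on $(-\infty,\gamma_R)$. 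Because $R'(0) = \beta < 0$, any critical point $u_c$ must lie in $(0,\gamma_R)$ (or we must argue the minimum sits at the endpoint $\gamma_R$ when $\gamma_R < \infty$).

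First I would treat the left endpoint. As $u \to -\infty$ we have $e^{uz} - 1 - uz \geq -1 - uz \to +\infty$ pointwise, and more quantitatively $\frac{\sigma^2}{2}u^2 \to +\infty$ while the integral term is bounded below by $-uz$ integrated against $\mu$ on, say, $(0,1]$ plus a nonnegative contribution — in any case $R(u) \to +\infty$ as $u \to -\infty$ (if $\sigma > 0$ this is immediate from the quadratic term; if $\sigma = 0$ then \eqref{eq: non degeneracy} forces $\mu \neq 0$ and the convex integrand drives $R$ to $+\infty$). Combined with $R'(0) = \beta < 0$ and continuity, $R$ decreases somewhere to the right of a neighbourhood of $-\infty$, so $R$ attains a minimum on $(-\infty,\gamma_R]$, which by strict convexity is unique; call it $u_c$, and since $R'(0)<0$ we get $u_c > 0$.

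Next I would pin down the location relative to $\gamma_R$. If $\gamma_R = +\infty$, then $R$ is strictly convex on all of $\R$ with $R'(0) < 0$, and one checks $R'(u) \to +\infty$ as $u \to +\infty$ (again via the quadratic term if $\sigma>0$, or via $\int_0^\infty z(e^{uz}-1)\mu(dz) \to \infty$ otherwise), so $R'$ has a unique zero $u_c \in (0,\infty)$. If $\gamma_R < \infty$, I would use that $R$ is continuous and convex on $(-\infty,\gamma_R]$ (the value $R(\gamma_R) \in (-\infty,+\infty]$ makes sense as a monotone limit, finite exactly when $\int_1^\infty e^{\gamma_R z}\mu(dz) < \infty$); in either case the minimum over the closed-on-the-right interval is attained at some $u_c \in (0,\gamma_R]$, with $u_c = \gamma_R$ possible when $R$ is still strictly decreasing up to $\gamma_R$. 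Either way $u_c \in (0,\gamma_R]$.

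Finally, $\lambda_R := -R(u_c) > 0$ because $R(u_c) \leq R(0^+)$ — more precisely $R(u_c) < R(0) = 0$, since $R'(0) = \beta < 0$ means $R$ is strictly below $0$ just to the right of the origin, so the minimum value is strictly negative; and $\lambda_R < \infty$ since $R(u_c) > -\infty$ (a convex function finite at $0$ is finite, hence bounded below on any bounded set, and the infimum over $(-\infty,\gamma_R]$ is finite because $R \to +\infty$ at $-\infty$ and $R$ is bounded below near $\gamma_R$ by convexity). The main obstacle is the bookkeeping at the right endpoint $\gamma_R$ when $\gamma_R<\infty$: one must argue cleanly that the minimum is still attained (possibly at $\gamma_R$ itself) and that $R(u_c)$ is finite there even if $R(\gamma_R)$ could be $+\infty$ — this is where strict convexity and the sign of $R'(0)$ do the real work, guaranteeing the minimizer does not escape to an endpoint where $R$ blows up.
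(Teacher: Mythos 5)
Your argument is correct and follows essentially the same route as the paper: both rest on strict convexity of $R$ (so $R'$ is strictly increasing), the sign $R'(0)=\beta<0$ to force $u_c>0$ and $R(u_c)<0$, and a dichotomy between $R'$ reaching zero before $\gamma_R$ (interior minimum via the intermediate value theorem) and $R'$ staying negative up to $\gamma_R$ (minimum at the endpoint, where finiteness of $R(\gamma_R)$ follows automatically). The only cosmetic difference is that the paper proves $R(u_c)<0$ by a midpoint strict-convexity contradiction rather than by your (equally valid, and simpler) observation that $R$ dips strictly below $R(0)=0$ just to the right of the origin.
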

A proof of this statement is given in the appendix.
The value $\lambda_R$ acts as a critical threshold for the existence of solutions of the equation 
\begin{equation}\label{equilibria}
 R(u)+\lambda = 0.
\end{equation}
 More precisely, we have the following result. 
\begin{Proposition}\label{prop: y properties}
 Let $X^x$ be the subcritical CBI process with parameters $(b, \beta, \sigma, \nu,\mu)$, $x \geq 0$, satisfying \eqref{eq: non degeneracy} and $\gamma_R > 0$. Then there exists a unique continuous function $y: (-\infty, \lambda_R] \longrightarrow (-\infty,u_c]$ such that
 \[
  R(y(\lambda)) + \lambda = 0, \qquad \forall \lambda \in (-\infty, \lambda_R]
 \]
 with $y(\lambda_R) = u_c$.
 This function is continuously differentiable on $(-\infty, \lambda_R)$ with
 \begin{align}\label{eq: y0 derivative}
  y'(\lambda) = -\frac{1}{R'(y(\lambda))} > 0.
 \end{align}
 Equation \eqref{equilibria} has no solutions for $\lambda > \lambda_R$.
\end{Proposition}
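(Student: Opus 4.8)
The plan is to construct $y$ as the inverse of $R$ restricted to the half-line $(-\infty,u_c]$, on which $R$ is a strictly decreasing continuous bijection onto $[-\lambda_R,\infty)$.

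I would begin by recording the elementary identities $R(0)=0$ and $R'(0)=\beta<0$, obtained by differentiating the L\'evy--Khinchine representation of $R$ under the integral sign (legitimate since $R$ is analytic on $(-\infty,\gamma_R)$ and $\gamma_R>0$) and invoking subcriticality. By \eqref{eq: non degeneracy} the function $R$ is strictly convex, so $R'$ is strictly increasing on $(-\infty,\gamma_R)$; together with $R'(0)<0$ and the fact from Lemma \ref{minimum_u_c} that $R$ attains its global minimum at $u_c>0$, this forces $R'(u)<0$ for every $u\in(-\infty,u_c)$. Indeed, if $R'(u_0)\geq 0$ for some $u_0<u_c$, then $R$ would be strictly increasing on $[u_0,\gamma_R)$ and hence $R(u_c)>R(u_0)$, contradicting minimality of $R(u_c)$. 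Consequently $R$ is continuous and strictly decreasing on $(-\infty,u_c]$, with $R(u_c)=-\lambda_R$.

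Next I would determine the range. Convexity yields the tangent-line bound $R(u)\geq R(0)+R'(0)u=\beta u$ for all $u$ in the domain of $R$, so $R(u)\to+\infty$ as $u\to-\infty$ because $\beta<0$. By the intermediate value theorem and strict monotonicity, $R$ maps $(-\infty,u_c]$ bijectively onto $[-\lambda_R,\infty)$. For $\lambda\in(-\infty,\lambda_R]$ I then let $y(\lambda)\in(-\infty,u_c]$ be the unique preimage of $-\lambda$ under this bijection; by construction $R(y(\lambda))+\lambda=0$ and $y(\lambda_R)=u_c$. As the inverse of a continuous strictly monotone function on an interval, $y$ is continuous up to and including the endpoint $\lambda_R$, and it is increasing since both $\lambda\mapsto-\lambda$ and the inverse of the decreasing map $R|_{(-\infty,u_c]}$ are decreasing.

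Finally, for differentiability I restrict to $\lambda\in(-\infty,\lambda_R)$, where $y(\lambda)\in(-\infty,u_c)$ and $R$ is $C^1$ with $R'(y(\lambda))<0\neq 0$; the inverse function theorem gives that $y$ is $C^1$ with $y'(\lambda)=-1/R'(y(\lambda))$, which is $>0$ because $R'(y(\lambda))<0$, proving \eqref{eq: y0 derivative}. For $\lambda>\lambda_R$, equation \eqref{equilibria} amounts to $R(u)=-\lambda<-\lambda_R=\min R$, which has no solution in the domain of $R$ by Lemma \ref{minimum_u_c}. The only mildly delicate points are the integral differentiation giving $R(0)=0$ and $R'(0)=\beta$, and the global lower bound $R(u)\to+\infty$ as $u\to-\infty$; both are routine, the latter following at once from the tangent-line inequality. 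The strict convexity and the control on the global minimum provided by Lemma \ref{minimum_u_c} do all the real work.
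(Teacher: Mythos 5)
Your proof is correct. It is built from the same ingredients as the paper's argument (strict convexity of $R$, the sign $R'<0$ on $(-\infty,u_c)$, the blow-up of $R$ as $u\to-\infty$, and the inverse/implicit function theorem), but it is organized differently and somewhat more cleanly. The paper establishes existence of $y(\lambda)$ by a case-by-case intermediate value argument (separately for $\lambda\in(0,\lambda_R]$, $\lambda=0$, $\lambda<0$), selects the smallest root, gets differentiability from the implicit function theorem, and then needs a separate bespoke limiting argument (writing $y(\lambda)=\int_0^{\lambda}\frac{-1}{R'(y(\xi))}\,d\xi$ and identifying the limit as $\lambda\nearrow\lambda_R$) to prove continuity at the endpoint $\lambda_R$. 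You instead observe once and for all that $R$ restricted to $(-\infty,u_c]$ is a continuous strictly decreasing bijection onto $[-\lambda_R,\infty)$ and define $y$ as its inverse composed with $\lambda\mapsto-\lambda$; this delivers existence, uniqueness of the branch, monotonicity, and continuity up to and including $\lambda_R$ in a single stroke, with the inverse function theorem then handling smoothness on the open interval. The only point worth making explicit in your write-up is that $R$ is indeed defined and finite at $u_c$ in the boundary case $u_c=\gamma_R$, which is exactly what Lemma \ref{minimum_u_c} (Case 2 of its proof) guarantees; with that reference your argument is complete.
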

Also here the proof of this proposition is given in the appendix.
Since $y$ is strictly increasing with $y(0) = 0$ and $y(\lambda_R) = u_c > 0$, it has a unique solution $\lambda_F > 0$ of $y(\lambda_F) = \gamma_F$ whenever $0<\gamma_F \leq u_c$. If $0 < u_c < \gamma_F$, then this equation has no solution and we define $\lambda_F = +\infty$. Finally let
\[
 \lambda_c = \min\{\lambda_F, \lambda_R\} > 0.
\]
Then $y(\lambda_c) = \min\{u_c,\gamma_F\} \leq \min\{\gamma_R, \gamma_F\}$. Hence, since $y(\lambda) < y(\lambda_c)$ for $\lambda<\lambda_c$, we find that $R(y(\lambda)), F(y(\lambda))$ are continuously differentiable on $\lambda < \lambda_c$. Moreover, it is not difficult to check that $F$ and $R$ are well-defined on the extended real line when $\lambda = \lambda_c$. We are now prepared to state our main result.

\begin{Theorem}[Large deviation principle]\label{main_result}
 Let $X^x$ be the subcritical CBI process with parameters $(b, \beta, \sigma, \nu,\mu)$, $x \geq 0$, satisfying \eqref{eq: non degeneracy}, $\gamma_R, \gamma_F \in (0,\infty]$, and $F \neq 0$. Then the time-averaged process $(Y^x_t)_{t \geq 0}$ defined by \eqref{eq: time-averaged process} satisfies for each Borel set $A \subset \R_+$
\begin{align}
 - \inf_{y \in A^{\circ} \cap (0, \alpha)} \Lambda^{\ast}(y) 
 &\leq \liminf_{t \to \infty}\frac{1}{t}\log\P\left[ Y^x_t \in A \right] \label{eq: estimate LDP}
 \\ &\leq \limsup_{t \to \infty}\frac{1}{t}\log\P\left[ Y^x_t \in A \right] \leq - \inf_{y \in \overline{A}}\Lambda^{\ast}(y) \notag
\end{align}
where $\Lambda^{\ast}$ is a good rate function given by 
\begin{displaymath}
 \Lambda^\ast(x)=\sup_{\lambda\leq\lambda_c}\left\{\lambda x - F(y(\lambda)) \right\},
\end{displaymath}
and $\alpha \in [m, \infty]$ is determined by
\[
 \alpha = \sup_{ \lambda < \lambda_c}-\frac{F^\prime(y(\lambda))}{R^\prime(y(\lambda))}.
\]
Finally, the rate function $\Lambda^\ast$ satisfies $\Lambda^*(m) = 0$ and $\Lambda^*(x) > 0$ whenever $x \in \R_+ \setminus \{m\}$.
\end{Theorem}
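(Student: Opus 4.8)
The plan is to apply the Gärtner--Ellis theorem to $(Y_t^x)_{t\ge0}$, so the core of the argument is the identification of the scaled cumulant generating function
\[
 \Lambda(\lambda):=\lim_{t\to\infty}\frac1t\log\E\!\left[\exp\!\Big(\lambda\int_0^t X_s^x\,ds\Big)\right],\qquad\lambda\in\R,
\]
and the verification of the hypotheses that turn it into the rate function $\Lambda^\ast$. The first and crucial step is to upgrade \eqref{eq: affine formula} to the joint Laplace transform of $\big(X_t^x,\int_0^t X_s^x\,ds\big)$: using the Markov property together with It\^o's formula applied to $s\mapsto\exp\!\big(\psi(t-s,\lambda)X_s^x+\lambda\int_0^s X_r^x\,dr+\int_0^{t-s}F(\psi(r,\lambda))\,dr\big)$ --- equivalently, by the general theory of affine processes --- one obtains, for every $\lambda$ for which the right-hand side is finite,
\[
 \E\!\left[\exp\!\Big(\lambda\int_0^t X_s^x\,ds\Big)\right]=\exp\!\Big(x\,\psi(t,\lambda)+\int_0^t F(\psi(s,\lambda))\,ds\Big),
\]
where $\psi(\cdot,\lambda)$ solves the generalized Riccati equation $\partial_t\psi=R(\psi)+\lambda$ with $\psi(0,\lambda)=0$. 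For $\lambda>0$ this goes beyond the range $\lambda\le0$ of \eqref{eq: affine formula}, and one must show that finiteness of the exponential moment is equivalent to global existence of $\psi(\cdot,\lambda)$ inside the domain of $R$; this is where Lemma \ref{minimum_u_c} and Proposition \ref{prop: y properties} enter.

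Next I analyze the Riccati flow. For $\lambda<\lambda_c\le\lambda_R=-R(u_c)$ the function $R(\cdot)+\lambda$ is strictly negative between its two zeros, vanishing at the stable point $y(\lambda)\in(-\infty,u_c)$ with $R'(y(\lambda))<0$ and at a second point above $u_c$; since $\psi(0,\lambda)=0$ lies on the stable side and $y(\lambda)<y(\lambda_c)\le\min\{\gamma_R,\gamma_F\}$, the solution $\psi(\cdot,\lambda)$ exists for all $t\ge0$, stays below $y(\lambda_c)$, and converges monotonically to $y(\lambda)$. As $F$ is finite and continuous on $(-\infty,\gamma_F)$, a Ces\`aro argument gives $t^{-1}\int_0^t F(\psi(s,\lambda))\,ds\to F(y(\lambda))$ and $x\psi(t,\lambda)/t\to0$, hence $\Lambda(\lambda)=F(y(\lambda))$ for $\lambda<\lambda_c$. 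At $\lambda=\lambda_c$ the equilibrium is $y(\lambda_c)=\min\{u_c,\gamma_F\}$ (semi-stable if $\lambda_c=\lambda_R$), the solution still increases to it, and monotone convergence gives $\Lambda(\lambda_c)=F(y(\lambda_c))\in(-\infty,+\infty]$. For $\lambda>\lambda_c$ the solution either leaves the domain of $R$ in finite time or eventually exceeds $\gamma_F$, so $\int_0^t F(\psi(s,\lambda))\,ds=+\infty$ for large $t$; thus the exponential moment explodes and $\Lambda(\lambda)=+\infty$. In summary $\Lambda=F\circ y$ on $(-\infty,\lambda_c]$ and $\Lambda=+\infty$ beyond, $0$ is an interior point of $\{\Lambda<\infty\}$ since $\lambda_c>0$, and $\Lambda(0)=F(0)=0$ while by \eqref{eq: y0 derivative} $\Lambda'(0)=-F'(0)/R'(0)=-F'(0)/\beta=m$.

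It then remains to verify the Gärtner--Ellis hypotheses and read off the rate function. By Proposition \ref{prop: y properties}, $\Lambda$ is differentiable on $(-\infty,\lambda_c)$ with $\Lambda'(\lambda)=-F'(y(\lambda))/R'(y(\lambda))$; differentiating once more and using $F''\ge0$, $F'>0$ on $(-\infty,\gamma_F)$ (as $F\neq0$), $R''>0$ (by \eqref{eq: non degeneracy}) and $R'<0$ on $(-\infty,u_c)$ yields $\Lambda''>0$, so $\Lambda$ is strictly convex. It is steep at $\lambda_c$, hence essentially smooth, unless $\lambda_c=\lambda_F<\lambda_R$ and $F'(\gamma_F)<\infty$; in the essentially smooth case $\alpha=\sup_{\lambda<\lambda_c}\Lambda'(\lambda)=+\infty$ and the full Gärtner--Ellis theorem yields \eqref{eq: estimate LDP} with the lower bound over all of $A^\circ$. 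In the remaining case the Gärtner--Ellis lower bound holds over the exposed points of $\Lambda^\ast$ with exposing hyperplane in $(-\infty,\lambda_c)$, which by strict convexity form the interval $\big(\lim_{\lambda\to-\infty}\Lambda'(\lambda),\alpha\big)$; since letting $\lambda\to-\infty$ in $\lambda y-F(y(\lambda))$ and using $R(u)\to+\infty$ as $u\to-\infty$ shows $\Lambda^\ast\equiv+\infty$ to the left of this interval, while convexity and lower semicontinuity make $\Lambda^\ast$ right-continuous at its left endpoint, the infimum over the exposed points coincides with the infimum over $A^\circ\cap(0,\alpha)$, giving \eqref{eq: estimate LDP}. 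Goodness of $\Lambda^\ast$ is a consequence of $0\in\mathrm{int}\{\Lambda<\infty\}$, and $\Lambda'(0)=m$ together with monotonicity of $\Lambda'$ gives $\alpha\ge m>0$. Finally, since $\Lambda$ is strictly convex, differentiable at $0$, with $\Lambda(0)=0$ and $\Lambda'(0)=m$, Fenchel--Young yields $\Lambda^\ast(m)=0$, and for $x\neq m$ the supremum defining $\Lambda^\ast(x)$ is attained at some $\lambda\neq0$ with $\lambda x-\Lambda(\lambda)>0-\Lambda(0)=0$, so $\Lambda^\ast(x)>0$.

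The main obstacle is the first step: rigorously justifying the Riccati representation of the exponential functional of the time-averaged process and determining precisely the set of $\lambda$ for which it is finite --- this is the ``detailed study of the generalized Riccati equation and related exponential moments'' announced in the abstract. A secondary delicate point is the critical value $\lambda=\lambda_c$, where $F(y(\lambda_c))$ may be $+\infty$ and a monotone/truncation argument is needed both for the limit defining $\Lambda$ and when taking its Legendre transform.
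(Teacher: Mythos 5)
Your proposal is correct and follows essentially the same route as the paper: identify $\Lambda(\lambda)=F(y(\lambda))$ for $\lambda\le\lambda_c$ (and $+\infty$ beyond) via the generalized Riccati equation, apply the G\"artner--Ellis theorem, and cover $(0,\alpha)$ by exposed points using the continuity and monotonicity of $\Lambda'(\lambda)=-F'(y(\lambda))/R'(y(\lambda))$. The only cosmetic difference is at the end, where you obtain $\Lambda^{\ast}(m)=0$ and $\Lambda^{\ast}(x)>0$ for $x\neq m$ directly from convexity together with $\Lambda(0)=0$ and $\Lambda'(0)=m$, whereas the paper substitutes $y=y(\lambda)$ and analyzes $\psi_x(y)=\lambda(y)x-F(y)$; both arguments work.
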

Here $A^{\circ}$ and $\bar{A}$ denote the interior and closure of $A$, respectively. Moreover $\Lambda^{\ast}$ is, by definition, a good rate function if it is level sets $\{x \colon \Lambda^{\ast}(x) \leq r\}$ are compact for any $r$. Under the conditions of Theorem \ref{main_result}, we have $Y_t \longrightarrow \delta_m$ weakly as $t \to \infty$ e.g. due to Theorem \ref{thm: lln}). Consequently, we have $\P[ Y_t \in A] \longrightarrow \delta_m(A)$ as $t \to \infty$ for each Borel set $A \subset [0,\infty)$ with $m \not \in \partial A$. In particular, the good rate function $\Lambda^*$ should be zero at $m$ which is shown in the last part of Theorem \ref{main_result}.

Comparing our main result with the classical large deviation principle, in our case the lower bound contains the intersection $A^{\circ} \cap (0,\alpha)$. Thus, if $\alpha = + \infty$, then it is natural to expect that $(Y^x_t)_{t \geq 0}$ satisfies the large deviation principle with good rate function $\Lambda^{\ast}$. This is precisely the content of the next corollary. For additional details, results, and general background on large deviations we refer to \cite{dembo_zeitouni, varadhan}.

\begin{Corollary}[Large deviation principle]\label{cor: main_result}
 Let $X^x$ be the subcritical CBI process with parameters $(b, \beta, \sigma, \nu,\mu)$, $x \geq 0$, satisfying \eqref{eq: non degeneracy}, $\gamma_R, \gamma_F \in (0,\infty]$, and $F \neq 0$. If 
 \begin{align}\label{eq: boundary condition}
  \lim_{\lambda \nearrow \lambda_c}\frac{F'(y(\lambda))}{|R'(y(\lambda))|} = + \infty,
 \end{align}
 then the time-averaged process $(Y^x_t)_{t \geq 0}$ defined by \eqref{eq: time-averaged process}
 satisfies the large deviation principle with good rate function
 $\Lambda^\ast$, i.e. 
 \begin{align*}
  - \inf_{y \in A^{\circ}} \Lambda^{\ast}(y) 
  &\leq \liminf_{t \to \infty}\frac{1}{t}\log\P\left[ Y^x_t \in A \right] 
  \\ &\leq \limsup_{t \to \infty}\frac{1}{t}\log\P\left[ Y^x_t \in A \right] \leq - \inf_{y \in \overline{A}}\Lambda^{\ast}(y)
 \end{align*}
 holds for each Borel set $A \subset \R_+$.
\end{Corollary}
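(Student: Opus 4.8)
The plan is to deduce the statement directly from Theorem~\ref{main_result}, the only new input being that the boundary condition \eqref{eq: boundary condition} forces $\alpha=+\infty$. First I would record that, by Proposition~\ref{prop: y properties}, $R'(y(\lambda))<0$ for every $\lambda<\lambda_c$ (note $\lambda_c\le\lambda_R$), while $F$ is differentiable at $y(\lambda)$ for such $\lambda$ with $F'(u)=b+\int_0^\infty z e^{uz}\,\nu(dz)\ge 0$; hence $-F'(y(\lambda))/R'(y(\lambda))=F'(y(\lambda))/|R'(y(\lambda))|\ge 0$, and since by \eqref{eq: boundary condition} this quantity tends to $+\infty$ as $\lambda\nearrow\lambda_c$, the supremum defining $\alpha$ equals $+\infty$. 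Substituting $\alpha=+\infty$ into Theorem~\ref{main_result}, its lower bound becomes
\[
 -\inf_{y\in A^{\circ}\cap(0,\infty)}\Lambda^\ast(y)\le\liminf_{t\to\infty}\frac{1}{t}\log\P\left[Y^x_t\in A\right],
\]
while the upper bound there is already exactly the one asserted in the corollary.

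It then remains only to show that $\inf_{y\in A^\circ}\Lambda^\ast(y)=\inf_{y\in A^\circ\cap(0,\infty)}\Lambda^\ast(y)$ for every Borel set $A\subset\R_+$. The inequality ``$\le$'' is trivial, and the two infima can differ only when $0\in A^\circ$; so assume $0\in A^\circ$, which, since $A^\circ$ is open in $\R_+$, means $[0,\delta)\subset A^\circ$ for some $\delta>0$, and in particular $A^\circ\cap(0,\infty)\neq\emptyset$. I would then use that $\Lambda^\ast$ is convex, being a supremum of the affine functions $x\mapsto\lambda x-F(y(\lambda))$, and that $\Lambda^\ast\ge 0$ with $\Lambda^\ast(m)=0$ by Theorem~\ref{main_result}. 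If $\Lambda^\ast(0)=+\infty$, then removing the single point $0$ leaves the infimum unchanged, so we are done. If $\Lambda^\ast(0)<\infty$, pick any $y_0\in A^\circ\cap(0,m)$; convexity gives
\[
 \Lambda^\ast(y_0)\le\left(1-\frac{y_0}{m}\right)\Lambda^\ast(0)+\frac{y_0}{m}\,\Lambda^\ast(m)=\left(1-\frac{y_0}{m}\right)\Lambda^\ast(0)\le\Lambda^\ast(0),
\]
whence $\inf_{y\in A^\circ\cap(0,\infty)}\Lambda^\ast(y)\le\Lambda^\ast(y_0)\le\Lambda^\ast(0)$, and consequently $\inf_{A^\circ}\Lambda^\ast=\min\{\Lambda^\ast(0),\inf_{A^\circ\cap(0,\infty)}\Lambda^\ast\}=\inf_{A^\circ\cap(0,\infty)}\Lambda^\ast$.

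Putting the two steps together yields, for every Borel set $A\subset\R_+$,
\[
 -\inf_{y\in A^{\circ}}\Lambda^\ast(y)\le\liminf_{t\to\infty}\frac{1}{t}\log\P\left[Y^x_t\in A\right]\le\limsup_{t\to\infty}\frac{1}{t}\log\P\left[Y^x_t\in A\right]\le-\inf_{y\in\overline{A}}\Lambda^\ast(y),
\]
and since $\Lambda^\ast$ is a good rate function by Theorem~\ref{main_result}, this is precisely the claimed large deviation principle. I do not foresee a genuine obstacle here: all the analytic substance is already contained in Theorem~\ref{main_result}, and the only point that needs care is the behaviour of the rate function at the left endpoint $y=0$ of the interval $(0,\alpha)=(0,\infty)$, which the elementary convexity estimate above resolves.
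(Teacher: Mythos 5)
Your proof is correct, but it follows a genuinely different route from the paper's. The paper proves the corollary by returning to the G\"artner--Ellis theorem, part (c): it verifies that $\Lambda$ is essentially smooth --- finite and differentiable on $(-\infty,\lambda_c)$ and steep at $\lambda_c$, the steepness being exactly where \eqref{eq: boundary condition} enters via $|\Lambda'(\lambda_n)|=|F'(y(\lambda_n))|/|R'(y(\lambda_n))|\to\infty$ --- and separately that $\Lambda$ is lower semicontinuous (checked on $\lambda<\lambda_c$, $\lambda=\lambda_c$, $\lambda>\lambda_c$), and then invokes the full LDP conclusion of that theorem. You instead treat Theorem \ref{main_result} as a black box: the boundary condition forces $\alpha=+\infty$ (correct, since $R'(y(\lambda))<0$ on $(-\infty,\lambda_c)$ by Proposition \ref{prop: y properties} gives $-F'(y(\lambda))/R'(y(\lambda))=F'(y(\lambda))/|R'(y(\lambda))|$, whose supremum dominates the limit in \eqref{eq: boundary condition}), so the lower bound of Theorem \ref{main_result} runs over $A^\circ\cap(0,\infty)$, and the only remaining discrepancy with the asserted LDP is the single point $y=0$, which your convexity estimate $\Lambda^\ast(y_0)\le(1-y_0/m)\Lambda^\ast(0)+(y_0/m)\Lambda^\ast(m)=(1-y_0/m)\Lambda^\ast(0)$ for $y_0\in A^\circ\cap(0,m)$ disposes of. Your route is more economical, since it avoids re-verifying essential smoothness and lower semicontinuity of $\Lambda$; the price is the small extra argument at the origin, which is only needed if $A^\circ$ is read as the interior relative to $\R_+$ (so that $0$ can be an interior point) --- and under that reading your argument is in fact slightly more careful than the paper's, since the cited G\"artner--Ellis statement yields the infimum over the interior taken in $\R$. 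Both proofs are valid and yield the same conclusion.
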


Our results extend and complement those in \cite{zhu} in various ways.
Indeed, in \cite{zhu} above results were already obtained for the particular class of CBI processes with L\'evy measures given by $\nu = c_{\nu}\delta_a$ and $\mu = c_{\mu}\delta_a$ with constants $c_{\nu},c_{\mu} > 0$, and $a > 0$. In our framework the L\'evy measures $\nu,\mu$ can be arbitrary (including zero) as long as they have finite exponential moments, i.e., $\gamma_F, \gamma_R > 0$ holds. Such a condition is naturally satisfied for the L\'evy measures used in \cite{zhu}. Our extension imposes additional difficulties related with the Laplace transform $\E[e^{\lambda Y_t^x}]$ when $\lambda > 0$. In particular, the generalized Riccati equation $A'(t,\lambda) = R(A(t,\lambda)) + \lambda$, $A(0,\lambda) = 0$ characterising the Laplace transform of $Y_t^x$ may, in general, have only a local solution on $[0, T(\lambda))$ with the explosion time $T(\lambda)$ being closely related with $\gamma_R$ (see Section 3). Furthermore, in contrast to \cite{zhu}, we do not assume Feller's boundary condition which in our case reads as $2b \geq \sigma^2$. For more general conditions and results related with the Feller condition we refer to \cite{MR3264444, MR3263091}. Finally, in \cite{zhu} it is assumed that $\sigma > 0$ which is stronger than our condition \eqref{eq: non degeneracy}. Below we also provide additional comments on the LDP and the assumptions.
\begin{Remark}\label{remark: F zero}
 The condition $F \neq 0$ assures that $m > 0$. If $F = 0$, then $b = \nu = 0$ and hence $m = 0$ and $\alpha=0$. Thus the left bound in \eqref{eq: estimate LDP} becomes $-\infty$ and does not provide any information. For the right bound in \eqref{eq: estimate LDP} we note that $\Lambda^{\ast}(x) = \lambda_c x$ for $x \geq 0$ since $F = 0$, and hence we find for each Borel set $A \subset \R_+$
 \begin{align*}
  \limsup_{t \to \infty}\frac{1}{t}\log\P\left[ Y^x_t \in A \right] \leq - \lambda_c \inf(A).
 \end{align*}
\end{Remark} 
A proof of this statement is given in the last section of this work. Next we comment on the case where $R$ is not strictly convex, i.e., \eqref{eq: non degeneracy} does not hold.

\begin{Remark}
 If \eqref{eq: non degeneracy} does not hold, then $\sigma = \mu = 0$ but our main results still remain true. Indeed, in this case $X^x$ is a pure-jump Ornstein-Uhlenbeck process with $R(u) = \beta u$. Suppose that $\gamma_F > 0$ and $F \neq 0$. Then by inspection of the proofs we find that $y(\lambda) = \lambda |\beta|^{-1}$, $\lambda_c = |\beta|\gamma_F \in (0, \infty]$, and hence
 \[
  \Lambda^*(x) = \sup_{\lambda \leq \lambda_c} \left\{ \lambda x - F\left(\frac{\lambda}{|\beta|}\right) \right\}.
 \]
 In particular, Theorem \ref{main_result} remains true with this choice of $\Lambda^{\ast}$ instead, while Corollary \ref{cor: main_result} requires the additional condition $F'(\gamma_F) = + \infty$ whenever $\gamma_F < \infty$. 
 The case $F = 0$ is excluded here since then $X_t^x = e^{\beta t}x$ is deterministic.
\end{Remark}
Concerning condition \eqref{eq: boundary condition}, the situation is more involved. This condition is certainly satisfied if $\gamma_R = \gamma_F = + \infty$, i.e. $\mu, \nu$ have finite exponential moments of all orders. In all other cases it may happen, depending on $\mu,\nu$, that \eqref{eq: boundary condition} either holds or fails to hold. This is illustrated by the examples below.
\begin{Example}[Pure jump Ornstein-Uhlenbeck process]
 Take $\sigma = 0$, $\mu = 0$, then $R(u) = \beta u$ with $\beta < 0$ and $\gamma_R = + \infty$. Finally, for $\eta \in \R$ let
 \[
  \nu(dz) = \1_{(1,\infty)}(z) \frac{e^{-z}}{z^{1 + \eta}}dz.
 \]
 Then $\gamma_F = 1$, $\lambda_c = |\beta|$, and $y(\lambda) = \lambda |\beta|^{-1}$. Moreover, we have 
 \begin{align*}
  \lim_{\lambda \nearrow \lambda_c}\frac{F'(y(\lambda))}{|R'(y(\lambda))|}
  = \frac{b}{|\beta|} + \frac{1}{|\beta|}\int_1^{\infty}z^{-\eta}dz
 \end{align*}
 and hence condition \eqref{eq: boundary condition} holds if and only if $\eta \leq 1$.
\end{Example}
\begin{Example}[Jump-diffusion CIR process]
 Let $\beta < 0$, $\mu = 0$, and $\sigma > 0$. Then $R(u) = \beta u + \frac{\sigma^2}{2}u^2$ and $\gamma_R = +\infty$. Let $F \neq 0$ with the L\'evy measure $\nu$ satisfying $\gamma_F \in (0,\infty]$. Then $u_c = \frac{|\beta|}{\sigma^2}$, $\lambda_R = \frac{|\beta|^2}{2\sigma^2}$, and
 \[
  y(\lambda) = \frac{|\beta|}{\sigma^2} - \frac{\sqrt{\frac{\beta^2}{\sigma^2} - 2\lambda}}{\sigma}, \qquad \lambda \leq \lambda_R.
 \]
 Moreover, we find that 
 \[
  \lambda_F = \begin{cases}+\infty, & \text{ if } \gamma_F > \frac{|\beta|}{\sigma^2}
  \\ \frac{|\beta|^2}{2\sigma^2} - \frac{\sigma^2}{2}\left(\frac{|\beta|}{\sigma^2} - \gamma_F\right)^2, & \text{ if } 0 < \gamma_F \leq \frac{|\beta|}{\sigma^2}
  \end{cases}
 \]
 and hence
 \[
  \lambda_c = \begin{cases} \frac{|\beta|^2}{2\sigma^2}, & \text{ if } \gamma_F > \frac{|\beta|}{\sigma^2}
  \\ \frac{|\beta|^2}{2\sigma^2} - \frac{\sigma^2}{2}\left(\frac{|\beta|}{\sigma^2} - \gamma_F\right)^2, & \text{ if } 0 < \gamma_F \leq \frac{|\beta|}{\sigma^2}.
  \end{cases}
 \]
 If $\gamma_F \geq \frac{|\beta|}{\sigma^2}$, then $y(\lambda_c) = \frac{|\beta|}{\sigma^2}$ and hence \eqref{eq: boundary condition} holds due to $R'(|\beta|/\sigma^2) = 0$. If $\gamma_F < \frac{|\beta|}{\sigma^2}$, then \eqref{eq: boundary condition} is satisfied whenever $\nu$ satisfies
 \begin{align}\label{eq: 03}
  \int_0^{\infty}e^{\gamma_F z}z \nu(dz) = +\infty.
 \end{align}
 Let us consider two examples of measures $\nu$. Letting
 \[
  \nu(dz) = \1_{(1,\infty)}(z)\frac{e^{-\gamma_F z}}{z^{1 + \eta}}dz
 \]
 with $\gamma_F \in (0,\infty)$ and $\eta \in \R$, we see that \eqref{eq: boundary condition} holds for $\gamma_F \geq |\beta|/\sigma^2$. For $\gamma_F < |\beta|/\sigma^2$ condition \eqref{eq: boundary condition} holds if and only if $\eta \leq 1$.
 
 Letting, for $\rho > 1$ and $\eta < 2$, 
 \[
  \nu(dz) = \1_{(0,\infty)}(z)\frac{e^{-z^{\rho}}}{z^{1+\eta}}dz,
 \]
 we find that $\gamma_F = +\infty$ and hence \eqref{eq: boundary condition} holds.
\end{Example}

\subsection{Structure of the work}

This work is organized as follows. In Section 2 we first prove Theorem \ref{thm: lln}. Afterwards, we establish a law of large numbers for the stochastic integrals against the Poisson random measures $N_{\nu}, N_{\mu}$ appearing in \eqref{SDE}. Finally, we close Section 2 by proving Theorem \ref{thm: clt}. Section 3 is devoted to a detailed asymptotic study of solutions to the generalized Riccati equation $A'(t,\lambda) = R(A(t,\lambda)) + \lambda$. In particular, we study the existence of exponential moments for $Y_t^x$ and consequently express its Laplace transform on $\R$ in terms of $A(t,\lambda)$. Our main results, Theorem \ref{main_result} and Corollary \ref{cor: main_result}, as well as the supplementary Remark \ref{remark: F zero} are subsequently proved in Section 4. The appendix contains some auxiliary results including the proofs of Lemma \ref{minimum_u_c}, Proposition \ref{prop: y properties}, and uniform moment bounds for subcritical CBI processes.

\section{Proof of LLN and CLT}

\subsection{Proof of the law of large numbers}

Here and below we fix the initial condition $x \geq 0$ and write, by abuse of notation, $X_t$ instead of $X_t^x$.
\begin{proof}[Proof of Theorem \ref{thm: lln}]
 Note that  
 \begin{align*}
 \E\left[\left(\frac{1}{t}\int_0^tX_sds - m\right)^2\right]
 &=\frac{1}{t^2}\E\left[\left(\int_0^t X_sds\right)^2\right]
 - \frac{2m}{t}\int_0^t\E[X_s]ds + m^2.
 \end{align*}
 Using the explicit formula for $\E[X_s]$, it is easy to see that $\frac{1}{t}\int_0^t \E[X_s]ds \xrightarrow{t\to\infty} m$. Hence, the assertion is proved once we have shown that 
 \begin{displaymath}
 \frac{1}{t^2}\E\left[\left(\int_0^tX_sds\right)^2\right] \xrightarrow{t\to\infty} m^2.
 \end{displaymath}
 To this end we use the explicit form of the conditional first moment of the process (see Lemma \ref{lemma: moment formula}) to find that
\begin{align*}
 \frac{1}{t^2}\E\left[\left(\int_0^t X_sds\right)^2\right]
 &= \frac{2}{t^2}\int_{\{0<s_1<s_2<t\}}\E[X_{s_1}\E[X_{s_2}\vert X_{s_1}]]ds_1ds_2
 \\ &=\frac{2}{t^2}\int_{\{0<s_1<s_2<t\}}\E[X_{s_1}^2] e^{\beta(s_2 - s_1)}ds_1 ds_2
 \\ & \qquad \qquad + \frac{2m}{t^2}\int_{\{0<s_1<s_2<t\}} \E[X_{s_1}]ds_1ds_2
 \\ & \qquad \qquad - \frac{2m}{t^2}\int_{\{0<s_1<s_2<t\}}\E[X_{s_1}]e^{\beta(s_2 - s_1)}ds_1ds_2. 
\end{align*}
 Using now Proposition \ref{prop: moment bounds} so that $C = \sup_{r \geq 0}\E[X_r^2] < \infty$, we conclude that the first term converges to zero due to
 \begin{align*}
     \frac{2}{t^2}\int_{\{0<s_1<s_2<t\}}\E[X_{s_1}^2]e^{\beta(s_2-s_1)}ds_1ds_2
     &\leq \frac{2C}{t^2}\int_0^t e^{\beta s_2} \int_0^{s_2} e^{-\beta s_1}ds_1 ds_2
     \\ &= \frac{2C}{t^2}\int_0^t \frac{1- e^{\beta s_2}}{-\beta}ds_2
     \\ &\leq \frac{2C}{|\beta|}\frac{1}{t}.
 \end{align*}
 In a very similar way, one can show that also the last term satisfies
 \[
  \lim_{t\to \infty} \frac{2m}{t^2}\int_{\{0<s_1<s_2<t\}}\E[X_{s_1}]e^{\beta(s_2 - s_1)}ds_1ds_2 = 0.
 \]
 Finally, for the second term we obtain
 \begin{align*}
     \frac{2m}{t^2}\int_{\{0<s_1<s_2<t\}} \E[X_{s_1}]ds_1ds_2
     &= \frac{2m}{t^2}\int_0^t \int_0^{s_2} \left( xe^{\beta s_1} + m(1 - e^{\beta s_1}) \right)ds_1 ds_2
     \\ &= \frac{2m x}{t^2}\frac{1}{\beta}\int_0^t  \left( e^{\beta s_2} - 1\right) ds_2
     \\ &\qquad + \frac{2m^2}{t^2}\int_0^t s_2 ds_2 - \frac{2m^2}{t^2}\frac{1}{\beta}\int_0^t \left( e^{\beta s_2} - 1 \right)ds_2
 \end{align*}
 Since the first and third summand here converge to zero, we have
 \[
  \lim_{t \to \infty}\frac{2m}{t^2}\int_{\{0<s_1<s_2<t\}} \E[X_{s_1}]ds_1ds_2
  = \lim_{t \to \infty} \frac{2m^2}{t^2}\int_0^t s_2 ds_2
  = m^2.
 \]
 This proves the assertion.
\end{proof}

We close this subsection with an auxiliary result on the law of large numbers for the Poisson integrals. It will be used in the proof of the central limit theorem.
\begin{Lemma}\label{lemma: lln Poisson}
 Under the conditions of Theorem \ref{thm: clt}, one has in probability
  \begin{align*}
  \frac{1}{t}\int_0^t\int_0^{\infty} z^2 N_{\nu}(ds,dz) \xrightarrow{t\to\infty} \int_0^\infty z^2\nu(dz).
 \end{align*}
 and 
 \begin{displaymath}
  \frac{1}{t}\int_0^t\int_0^{\infty} \int_0^{\infty} z^2 \1_{\{ u \leq X_{s-} \}}  N_{\mu}(ds,dz,du) \xrightarrow{t\to\infty} m\int_0^\infty z^2\mu(dz).
 \end{displaymath}
\end{Lemma}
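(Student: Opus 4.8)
The plan is to prove the two convergences separately; in both cases the main tool is a truncation of the large jumps together with an $L^2$-estimate for the compensated (small-jump) part.

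\emph{First convergence.} The process $A_t := \int_0^t\int_0^\infty z^2\,N_\nu(ds,dz)$ is increasing with stationary and independent increments, i.e.\ a subordinator, and $\E[A_1] = \int_0^\infty z^2\,\nu(dz) < \infty$ by \eqref{eq: second moments} together with $\int_0^1 z^2\,\nu(dz) \le \int_0^1 z\,\nu(dz) < \infty$. Applying the classical strong law of large numbers to the i.i.d.\ random variables $A_n - A_{n-1}$, $n \ge 1$, and using monotonicity of $t \mapsto A_t$ to interpolate between integer times, one gets $A_t/t \to \int_0^\infty z^2\,\nu(dz)$ almost surely, hence in probability. (Alternatively, one may split $A_t$ at level $K$ exactly as below.)

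\emph{Second convergence.} Write $c_\mu^{(K)} := \int_0^K z^2\,\mu(dz)$, $c_\mu := \int_0^\infty z^2\,\mu(dz)$, and decompose $B_t := \int_0^t\int_0^\infty\int_0^\infty z^2\1_{\{u \le X_{s-}\}}\,N_\mu(ds,dz,du)$ as $B_t = B_t^{(1),K} + R_t^K$, where $B_t^{(1),K}$ and $R_t^K$ collect the jumps with $z \le K$ and $z > K$, respectively. For the first part, the integrand $z^2\1_{\{z \le K\}}\1_{\{u \le X_{s-}\}}$ is predictable and the associated compensated integral $M_t^K := \int_0^t\int_0^K\int_0^\infty z^2\1_{\{u\le X_{s-}\}}\widetilde N_\mu(ds,dz,du)$ is a square-integrable martingale with predictable quadratic variation $\langle M^K\rangle_t = \big(\int_0^K z^4\,\mu(dz)\big)\int_0^t X_s\,ds$; note $\int_0^K z^4\,\mu(dz) < \infty$ since $z^4 \le z^2$ on $(0,1]$ while $z^4 \le K^2 z^2$ on $(1,K]$, the latter $\mu$-integrable by \eqref{eq: second moments}. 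Using the explicit first moment $\E[X_s] = x e^{\beta s} + m(1 - e^{\beta s})$ from Lemma \ref{lemma: moment formula}, which is bounded by $C_0 := \max\{x,m\}$ for all $s \ge 0$ because $\beta < 0$, we obtain $\E[(M_t^K)^2] = \big(\int_0^K z^4\,\mu(dz)\big)\int_0^t \E[X_s]\,ds \le C_0\,t\int_0^K z^4\,\mu(dz)$, so $M_t^K/t \to 0$ in $L^2$ for every fixed $K$. Since $B_t^{(1),K} = M_t^K + c_\mu^{(K)}\int_0^t X_s\,ds$, Theorem \ref{thm: lln} (which in particular gives $\frac1t\int_0^t X_s\,ds \to m$ in probability) yields $B_t^{(1),K}/t \to c_\mu^{(K)}\,m$ in probability for each fixed $K$.

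It remains to handle $R_t^K = \int_0^t\int_K^\infty\int_0^\infty z^2\1_{\{u \le X_{s-}\}}\,N_\mu(ds,dz,du) \ge 0$, for which $\E[R_t^K] = \big(\int_K^\infty z^2\,\mu(dz)\big)\int_0^t \E[X_s]\,ds \le C_0\,t\int_K^\infty z^2\,\mu(dz)$, so $\E[R_t^K/t] \le C_0\int_K^\infty z^2\,\mu(dz)$ uniformly in $t > 0$. A standard three-term estimate now closes the proof: writing $B_t/t - c_\mu m = (B_t^{(1),K}/t - c_\mu^{(K)} m) + R_t^K/t + (c_\mu^{(K)} m - c_\mu m)$, given $\varepsilon, \eta > 0$ we first choose $K$ so large that $m(c_\mu - c_\mu^{(K)}) < \varepsilon/3$ and, by Markov's inequality, $\P[R_t^K/t > \varepsilon/3] \le 3C_0\varepsilon^{-1}\int_K^\infty z^2\,\mu(dz) < \eta$ for all $t > 0$, and then let $t \to \infty$ using the convergence in probability from the previous paragraph. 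The one genuine obstacle is that $\int_0^\infty z^4\,\mu(dz)$ (respectively $\int_0^\infty z^4\,\nu(dz)$) need not be finite under \eqref{eq: second moments}, which is precisely why the truncation at level $K$ and the resulting double limit — first $t \to \infty$, then $K \to \infty$ — are needed and a single $L^2$-bound does not suffice.
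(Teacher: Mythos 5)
Your proof is correct. For the second (harder) convergence, against $N_{\mu}$, your argument is essentially the paper's: truncate the jumps at level $K$, bound the compensated truncated integral in $L^2$ using the predictable quadratic variation $\bigl(\int_0^K z^4\mu(dz)\bigr)\int_0^t X_s\,ds$ together with the uniform first-moment bound, control the tail $z>K$ by Markov's inequality uniformly in $t$, invoke Theorem \ref{thm: lln} for the drift term, and take the double limit $t\to\infty$ then $K\to\infty$; the only cosmetic difference is that you keep the compensator $c_\mu^{(K)}\int_0^t X_s\,ds$ attached to the truncated part and recombine at the end, whereas the paper first shows $\frac1t B_t - \frac1t\int_0^t\int_0^\infty z^2X_{s-}\,ds\,\mu(dz)\to 0$ and then applies the LLN. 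For the first convergence, against $N_\nu$, you genuinely depart from the paper: you observe that $A_t=\int_0^t\int_0^\infty z^2N_\nu(ds,dz)$ is a subordinator with $\E[A_1]=\int_0^\infty z^2\nu(dz)<\infty$ and apply the classical SLLN to its unit increments plus monotone interpolation, which yields almost sure convergence, strictly stronger than the convergence in probability the paper obtains via truncation and Chebyshev (the paper only gets $L^2$ convergence under the extra fourth-moment hypothesis it mentions after the proof). Your closing remark correctly identifies why the truncation and the order of limits are unavoidable under \eqref{eq: second moments} alone: $\int_0^\infty z^4\mu(dz)$ need not be finite, so a single global $L^2$ bound is not available.
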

\begin{proof}
 Fix $\e \in (0,1)$ and $n \in \N$. For the Poisson integral against $N_{\nu}$, we have
\begin{align*}
    &\ \P\left[\left|\frac{1}{t}\int_0^t\int_0^\infty z^2 N_{\nu}(ds,dz)-\int_0^\infty z^2\nu(dz)\right| > \e\right]
    \\ &\leq \P\left[\left|\frac{1}{t}\int_0^t\int_0^\infty z^2 N_{\nu}(ds,dz)- \frac{1}{t}\int_0^t \int_0^n z^2 N_{\nu}(ds,dz)\right| > \e/3\right]
    \\ &\qquad + \P\left[\left|\frac{1}{t}\int_0^t\int_0^n z^2 N_{\nu}(ds,dz)-\int_0^n z^2\nu(dz)\right| > \e/3\right]
    \\ &\qquad + \P\left[\left| \int_0^n z^2 \nu(dz)-\int_0^\infty z^2\nu(dz)\right| > \e/3\right]
    \\ &\leq  \frac{6}{\e}\int_n^{\infty} z^2 \nu(dz)
    + \frac{9}{\e^2}\E\left[\left(\frac{1}{t}\int_0^t\int_0^n z^2 N_{\nu}(ds,dz)-\int_0^n z^2\nu(dz)\right)^2\right].
\end{align*}
 The last term can be estimated by
 \begin{align*}
    \E\left[\left(\frac{1}{t}\int_0^t\int_0^n z^2 N_{\nu}(ds,dz)-\int_0^n z^2\nu(dz)\right)^2\right] 
    &= \frac{1}{t^2}\E\left[\left(\int_0^t\int_0^n z^2\widetilde{N}_\nu(ds,dz)\right)^2\right]
    \\ &= \frac{1}{t^2}\E\left[\int_0^t\int_0^n z^4 N_\nu(ds,dz)\right]
    \\ &=\frac{1}{t}\int_0^n z^4\nu(dz).
 \end{align*}
 Thus we arrive at
 \begin{align*}
     &\ \P\left[\left|\frac{1}{t}\int_0^t\int_0^\infty z^2 N_{\nu}(ds,dz)-\int_0^\infty z^2\nu(dz)\right| > \e\right]
     \\ &\leq \frac{6}{\e}\int_n^{\infty} z^2 \nu(dz) + \frac{9}{\e^2 t}\int_0^n z^4 \nu(dz).
 \end{align*}
 Letting first $t \to \infty$ and then $n \to \infty$ proves the assertion for the integral against $N_{\nu}$.
 Similar arguments yield for the second Poisson integral
\begin{align*}
    &\ \P\left[ \left|\frac{1}{t}\int_0^t\int_0^{\infty} \int_0^{\infty} z^2 \1_{\{ u \leq X_{s-} \}}  N_{\mu}(ds,dz,du) - \frac{1}{t}\int_0^t\int_0^{\infty} z^2 X_{s-} ds\mu(dz) \right| > \e \right]
    \\ &\leq \P\left[ \left| \frac{1}{t}\int_0^t\int_n^{\infty} \int_0^{\infty} z^2 \1_{\{ u \leq X_{s-} \}}  N_{\mu}(ds,dz,du) \right| > \e/3 \right]
    \\ &\ + \P\left[ \left| \frac{1}{t}\int_0^t\int_0^{n} \int_0^{\infty} z^2 \1_{\{ u \leq X_{s-} \}}  N_{\mu}(ds,dz,du) - \frac{1}{t}\int_0^t\int_0^{n} z^2 X_{s-} ds\mu(dz) \right| > \e/3 \right]
    \\ &\ + \P\left[ \left| \frac{1}{t}\int_0^t\int_n^{\infty} z^2 X_{s-} ds\mu(dz) \right| > \e/3 \right]
    \\ &\leq \frac{6}{\e t}\int_0^t \int_n^{\infty} z^2 \E[X_s] \mu(dz)ds
    \\ &\ + \frac{9}{\e^2}\E\left[ \left( \frac{1}{t}\int_0^t\int_0^{n} \int_0^{\infty} z^2 \1_{\{ u \leq X_{s-} \}}  N_{\mu}(ds,dz,du) - \frac{1}{t}\int_0^t\int_0^{n} z^2 X_{s-} ds\mu(dz) \right)^2 \right].
 \end{align*}
 The last term can be estimated by
 \begin{align*}
    &\ \E\left[ \left( \frac{1}{t}\int_0^t\int_0^{n} \int_0^{\infty} z^2 \1_{\{ u \leq X_{s-} \}}  N_{\mu}(ds,dz,du) - \frac{1}{t}\int_0^t\int_0^{n} z^2 X_{s-} ds\mu(dz) \right)^2 \right]
    \\ &=  \frac{1}{t^2 }\E\left[ \left( \int_0^t\int_0^{n} \int_0^{\infty} z^2 \1_{\{ u \leq X_{s-} \}}  \widetilde{N}_{\mu}(ds,dz,du) \right)^2 \right]
    \\ &= \frac{1}{t^2}\E\left[  \int_0^t\int_0^{n} \int_0^{\infty} z^4 \1_{\{ u \leq X_{s-} \}}  N_{\mu}(ds,dz,du)  \right]
    \\ &= \frac{1}{t} \int_0^{n} z^4 \mu(dz) \left(\frac{1}{t}\int_0^t\E[X_{s-}] ds \right)
 \end{align*}
 which tends to zero as $t \to \infty$.
 Again, letting first $t \to \infty$ and then $n \to \infty$ we obtain 
 \[
  \P\left[ \left|\frac{1}{t}\int_0^t\int_0^{\infty} \int_0^{\infty} z^2 \1_{\{ u \leq X_{s-} \}}  N_{\mu}(ds,dz,du) - \frac{1}{t}\int_0^t\int_0^{\infty} z^2 X_{s-} ds\mu(dz) \right| > \e \right] \to 0
 \]
 as $t \to \infty$.
 The assertion now follows from Theorem \ref{thm: lln}, i.e.
 \begin{align*}
    \E\left[ \left( \frac{1}{t}\int_0^t\int_0^{\infty} z^2 X_{s-} ds\mu(dz) - m \int_0^{\infty} z^2 \mu(dz) \right)^2 \right] \longrightarrow 0.
 \end{align*}
\end{proof}
This proof shows that the convergence can be strenghtened to $L^2$ provided that $\mu,\nu$ satisfy the stronger moment condition 
\[
 \int_1^{\infty}z^4 \left( \mu(dz) + \nu(dz) \right) < \infty.
\]
In such a case no approximation by cutting the big jumps (as done in the above proof) is necessary.

\subsection{Proof of the central limit theorem}

As before, we fix $x \geq 0$ and write $X_t$ instead of $X_t^x$.
\begin{proof}[Proof of Theorem \ref{thm: clt}]
 Using \eqref{SDE} we obtain
 \begin{align*}
     M_t &:= X_t - x - \int_0^t \left( \widehat{b} + \beta X_s \right)ds
     \\ &= \sigma \int_0^t \sqrt{X_s}dB_s + \int_0^t \int_0^{\infty}\int_0^{\infty} z \1_{\{u \leq X_{s-}\}} \widetilde{N}_{\mu}(ds,dz,du) 
     \\ &\qquad + \int_0^t \int_0^{\infty} z \widetilde{N}_{\nu}(ds,dz),
 \end{align*}
 where $\widehat{b}:=b+\int_{0}^{\infty} z \nu(dz)$,
 and $(M_t)_{t \geq 0}$ is a martingale. Dividing by $\beta$ and rearranging terms we arrive at $\int_0^t X_s ds - mt = \beta^{-1}X_t - \beta^{-1}x - \beta^{-1}M_t$ and hence
 \[
  \frac{1}{n}\int_0^{nt} X_s ds - mt = \beta^{-1}\frac{X_{nt}}{n} - \beta^{-1}\frac{x}{n} - \beta^{-1}\frac{M_{nt}}{n}
 \]
 Thus multiplying by $\sqrt{n}$ we obtain
 \begin{align}\label{eq: 00}
     \sqrt{n} \left( \frac{1}{n}\int_0^{nt} X_s ds - mt \right) &= \beta^{-1} \frac{X_{nt}}{\sqrt{n}} - \frac{\beta^{-1}x}{\sqrt{n}} - \beta^{-1} \frac{M_{nt}}{\sqrt{n}}.
 \end{align}
 The quadratic variation of the last term satisfies
 $\left[ M_{nt}/\sqrt{n} \right] = [M_{nt}]/n$ with
 \begin{align*}
  [M_{nt}] &= \sigma^2 \int_0^{nt} X_s ds + \int_0^{nt} \int_0^{\infty}\int_0^{\infty} z^2 \1_{\{u \leq X_{s-}\}} N_{\mu}(ds,dz,du) 
  \\ &\qquad \qquad + \int_0^{nt} \int_0^{\infty} z^2 N_{\nu}(ds,dz).
 \end{align*}
 Thus, using the law of large numbers for $X$ (see Theorem \ref{thm: lln}) and the Poisson random measures (see Lemma \ref{lemma: lln Poisson}), we conclude that
 $\left[ -\beta^{-1} M_{nt} / \sqrt{n} \right] = |\beta|^{-2} [M_{nt}] /n \longrightarrow \rho^2 t$. Thus using the central limit theorem for square-integrable martingale (see \cite[Cor. VIII-3.24]{jacod_shiryaev}), we conclude that $(-\beta)^{-1}M_{nt}/\sqrt{n} \Longrightarrow \rho W_t$ in law where $W_t$ is a standard Brownian motion.
 Finally, since $\E[X_{nt}]$ is uniformly bounded, we have $\sqrt{n}^{-1}\E[X_{nt}] \to 0$ and hence $X_{nt}/\sqrt{n} \to 0$ in probability. Thus the first two terms in \eqref{eq: 00} converge to zero in probability. This proves the assertion.
\end{proof}

\section{Moment generating function}

\subsection{Asymptotic analysis of generalized Riccati equation}

In this section we suppose that $(X_t^x)_{t \geq 0}$ is a subcritical CBI process with parameters $(b,\beta, \sigma,\nu,\mu)$ satisfying $\gamma_R > 0$.
Below we study the existence, uniqueness and long-time behavior of the solution of the equation
\begin{align}\label{eq: A equation}
 A'(t) &= R(A(t)) + \lambda, \qquad A(0) = 0
\end{align}
where $\lambda \in \R$. We start with the simpler case $\lambda \leq \lambda_R$.
\begin{Lemma}\label{lemma: riccati global existence}
 For each $\lambda \in (- \infty,\lambda_R]$, Equation \eqref{eq: A equation} has a unique global solution with
 \[
  \begin{cases}
   y(\lambda) < A(t,\lambda) \leq 0, & \text{ for } \lambda < 0
   \\ A(t,\lambda) = 0, & \text{ for } \lambda = 0
   \\ 0 \leq A(t,\lambda) < y(\lambda), & \text{ for } \lambda \in (0, \lambda_R]
  \end{cases}
 \]
 satisfying
 \begin{align}\label{eq: A convergence to y0}
  \lim_{t \to \infty}A(t,\lambda) = y(\lambda).
 \end{align} 
\end{Lemma}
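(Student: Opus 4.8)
The plan is to read \eqref{eq: A equation} as an autonomous scalar ODE $A'(t) = g(A(t))$ with $g(u) := R(u) + \lambda$. Three structural facts drive the argument: $g$ is real-analytic, hence locally Lipschitz, on $(-\infty, \gamma_R)$; one has $R(0) = 0$, and since $R$ is strictly convex by \eqref{eq: non degeneracy} and attains its global minimum at $u_c$ by Lemma \ref{minimum_u_c}, $R$ is strictly decreasing on $(-\infty, u_c)$; and by Proposition \ref{prop: y properties} the constant functions $t \mapsto y(\lambda)$ (for $\lambda \neq 0$), $t \mapsto 0$ (for $\lambda = 0$), and $t \mapsto u_c = y(\lambda_R)$ are stationary solutions of the equation. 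Picard--Lindel\"of yields a unique maximal solution $A(\cdot,\lambda)$ on an interval $[0, T_{\max}(\lambda))$, so the substance of the lemma is to show $T_{\max}(\lambda) = \infty$ and to locate the solution and its limit.

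The case $\lambda = 0$ is immediate since $g(0) = R(0) = 0$, whence $A(\cdot,0) \equiv 0$ is the unique solution. For $\lambda < 0$ we have $g(0) = \lambda < 0$, and since $(y(\lambda),0] \subset (-\infty, u_c)$ with $R$ strictly decreasing there and $R(y(\lambda)) = -\lambda$, it follows that $g(u) < 0$ for all $u \in (y(\lambda), 0]$. Hence, as long as $A(t,\lambda)$ lies in $(y(\lambda), 0]$ it is strictly decreasing; starting at $0$ it cannot return to $0$ (that would require $A' \geq 0$ at a point where $A = 0$, contradicting $g(0) < 0$) nor reach the value $y(\lambda)$ (that would force $A \equiv y(\lambda)$ by uniqueness). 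Therefore $A(t,\lambda) \in (y(\lambda), 0]$ throughout $[0, T_{\max}(\lambda))$, and since $[y(\lambda), 0]$ is a compact subset of $(-\infty, \gamma_R)$ the solution does not explode, so $T_{\max}(\lambda) = \infty$. Being monotone and bounded, $A(t,\lambda)$ converges to some $\ell \in [y(\lambda), 0) \subset (-\infty, u_c)$ as $t \to \infty$; passing to the limit in $A'(t,\lambda) = g(A(t,\lambda))$ forces $g(\ell) = 0$, and since $y(\lambda)$ is the unique zero of $g$ in $(-\infty, u_c)$ we conclude $\ell = y(\lambda)$, which is \eqref{eq: A convergence to y0}. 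The strict inequalities $y(\lambda) < A(t,\lambda) \leq 0$ are exactly what was shown along the way.

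For $\lambda \in (0, \lambda_R]$ the reasoning is the mirror image. Now $g(0) = \lambda > 0$, and on $[0, y(\lambda))$, equivalently on $[0, u_c)$ when $\lambda = \lambda_R$, strict monotonicity of $R$ on $(-\infty, u_c)$ gives $g(u) > R(y(\lambda)) + \lambda = 0$. Thus $A(\cdot,\lambda)$ increases from $0$, cannot cross the stationary solution $y(\lambda)$ (respectively $u_c$), hence remains in $[0, y(\lambda)] \subset (-\infty, u_c]$, a compact subset of $(-\infty,\gamma_R]$; so $T_{\max}(\lambda) = \infty$ and the monotone bounded limit is again the unique zero of $g$ in $(-\infty, u_c]$, i.e. $y(\lambda)$ for $\lambda < \lambda_R$ and $u_c = y(\lambda_R)$ for $\lambda = \lambda_R$. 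In the borderline case $u_c = \gamma_R < \infty$ at $\lambda = \lambda_R$ one checks that $R$ is still $C^1$ up to $u_c$: since $R$ is non-increasing on $(-\infty, u_c)$ the integral $\int_0^\infty (e^{uz}-1)z\,\mu(dz)$ cannot diverge as $u \nearrow \gamma_R$, so $R'(\gamma_R^-)$ is finite and the Lipschitz and non-crossing arguments apply unchanged. Global uniqueness in each case follows from local uniqueness and the usual continuation argument, using that on any bounded time interval the solution stays in a compact subset of the domain on which $g$ is Lipschitz.

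The step I expect to be the main obstacle is the a priori confinement of $A(\cdot,\lambda)$ between $0$ and the stationary point $y(\lambda)$ (or $u_c$): it relies on matching the sign of $g(0)$ with the impossibility of crossing a stationary solution. This is precisely the mechanism that fails when $\lambda > \lambda_R$, where $R(u) + \lambda = 0$ has no root, no stationary solution can trap $A$, and the solution explodes in finite time; that complementary regime is treated in the next part of the asymptotic analysis.
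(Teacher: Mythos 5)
Your proof is correct, but it takes a genuinely different route from the paper. The paper proves existence by separation of variables: it introduces $H(x)=\int_0^x\frac{du}{R(u)+\lambda}$, inverts it to get $A(t,\lambda)=H^{-1}(t)$, and obtains global existence by showing that the improper integral $\int_0^{y(\lambda)}\frac{du}{R(u)+\lambda}$ diverges because $y(\lambda)$ is a simple zero of $R(\cdot)+\lambda$ for $\lambda<\lambda_R$ (a zero of order two when $\lambda=\lambda_R$); convergence \eqref{eq: A convergence to y0} is then derived by contradiction from the same integral identity. You instead run the standard phase-line analysis for autonomous scalar ODEs: sign of $R(\cdot)+\lambda$ between $0$ and the equilibrium, non-crossing of the stationary solution $y(\lambda)$ via uniqueness, confinement in a compact subset of the domain to rule out blow-up, and the fact that a monotone bounded orbit must converge to a zero of the vector field, which is unique in $(-\infty,u_c]$ by strict monotonicity of $R$ there. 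Both arguments are sound; your treatment of the borderline case $u_c=\gamma_R<\infty$ (finiteness of $R'(\gamma_R^-)$ from monotonicity, so that the Lipschitz and non-crossing steps survive) is a point the paper handles implicitly through Case 2 of the proof of Lemma \ref{minimum_u_c}. The trade-off is that your route avoids estimating the improper integral and is arguably more robust, whereas the paper's explicit function $H$ is not just a device for this lemma: the same integral representation is reused in Lemma \ref{lemma: riccati solution explosion} to identify the explosion time $T(\lambda)=\int_0^{\gamma_R}\frac{du}{R(u)+\lambda}$ for $\lambda>\lambda_R$, which your qualitative argument would not produce. One cosmetic point: for $\lambda\in(0,\lambda_R]$ you write that the solution "remains in $[0,y(\lambda)]$"; the non-crossing argument in fact gives the strict bound $A(t,\lambda)<y(\lambda)$ claimed in the lemma, as you use implicitly when taking the limit.
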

\begin{proof}
 Since $R$ is twice continuously differentiable, it is locally Lipschitz continuous and hence this equation has at most one solution. For $\lambda=0$, this solution is explicitly given by $A(t)=0 = y(0)$ for all $t\geq 0$ and therefore the assertion is proved. 
 
 Hence, let $\lambda\in(-\infty,\lambda_R]\setminus\{0\}$. The existence of a solution can be obtained from separation of variables. Indeed, any solution is determined by the relation
 \[
  H(A(t)) = \int_0^{A(t)}\frac{du}{R(u) + \lambda} = t.
 \]
 where we have set $H(x) = \int_0^x \frac{du}{R(u) + \lambda}$. Below we prove that $H$ is invertible so that the unique solution is given by $A(t) = H^{-1}(t)$ for all $t \geq 0$.
 
 \textit{Case 1:} $\lambda < 0$. In this case we have $R(u) + \lambda < 0$ for $u \in (y(\lambda),0]$ and hence $H(x) = - \int_x^0 \frac{du}{R(u) + \lambda}$ is well-defined and decreasing on $(y(\lambda),0]$. Hence it has an
 inverse $H^{-1}: [0,T^*) \longrightarrow (y(\lambda),0]$ with
  \[
   T^* = \int_0^{y(\lambda)} \frac{du}{R(u) + \lambda} \in (0,\infty].
  \]
  To prove that this solution is also global, we need to show that $T^* = + \infty$. To this end, we note that $R(y(\lambda)) + \lambda = 0$ but $R'(y(\lambda)) \neq 0$ since $y(\lambda) \neq u_c$ for $\lambda < \lambda_R$. Hence $y(\lambda)$ is a simple zero and we can write $R(u) + \lambda = (u - y(\lambda))\widetilde{R}(u)$ for some $C^1$-function $\widetilde{R}$ in the neighborhood of $y(\lambda)$. Thus $T^* = +\infty$. Now suppose that \eqref{eq: A convergence to y0} does not hold. Then there exists $\e > 0$ and a sequence $(t_n)_{n \in \N}$ satisfying $t_n \nearrow \infty$ and $y(\lambda) + \e < A(t_n,\lambda) \leq 0$. Hence we obtain
  \[
   t_n = \int_0^{A(t_n,\lambda)}\frac{du}{R(u) + \lambda} \leq \int_0^{y(\lambda)+\e} \frac{du}{R(u) + \lambda} < \infty
  \]
  where we have used that  the integrand is nonpositive. Letting $n \to \infty$ yields a contradiction. Thus \eqref{eq: A convergence to y0} holds.
     
  \textit{Case 2:} $\lambda \in (0,\lambda_R]$. In this case we have $R(u) + \lambda > 0$ for $u \in [0, y(\lambda))$ and hence $H$ is well-defined on $[0,y(\lambda))$ and increasing. Its inverse $H^{-1}: [0,T^*) \longrightarrow [0,y(\lambda))$ also satisfies $T^*= + \infty$ by exactly the same argument (with a simple zero when $\lambda < \lambda_R$ and possibly a zero of order 2 when $\lambda = \lambda_R$). Suppose that \eqref{eq: A convergence to y0} does not hold. Then there exists $\e > 0$ and a sequence $(t_n)_{n \in \N}$ satisfying $t_n \nearrow \infty$ and $A(t_n,\lambda) < y(\lambda) - \e$. Hence we obtain
  \[
   t_n = \int_0^{A(t_n,\lambda)}\frac{du}{R(u) + \lambda} \leq \int_0^{y(\lambda)-\e} \frac{du}{R(u) + \lambda} < \infty.
  \]
  which gives a contradiction for $n$ large enough. Thus \eqref{eq: A convergence to y0} holds.
\end{proof}

\begin{Lemma}\label{lemma: riccati solution explosion}
 For each $\lambda > \lambda_R$, Equation \eqref{eq: A equation} has a unique solution up to time 
 \[
  T(\lambda) =  \int_0^{\gamma_R} \frac{du}{R(u) + \lambda} \in (0, \infty].
 \]
 This solution satisfies $0 \leq A(t,\lambda) < \gamma_R$ for $t \in [0, T(\lambda))$ and
 \begin{align}\label{eq: A convergence to gammaR}
  \lim_{t \nearrow T(\lambda)}A(t,\lambda) = \gamma_R.
 \end{align}
 Finally, $\gamma_R < \infty$ implies that $T(\lambda) < \infty$.
\end{Lemma}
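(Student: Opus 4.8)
The plan is to construct the solution explicitly by separation of variables, exactly as in the proof of Lemma~\ref{lemma: riccati global existence}, the point being that for $\lambda > \lambda_R$ the denominator $R(u) + \lambda$ stays strictly positive on the whole domain of $R$. First I would record that, since $u_c$ is the global minimizer of $R$ with $R(u_c) = -\lambda_R$ by Lemma~\ref{minimum_u_c}, one has
\[
 R(u) + \lambda \ \ge\ \lambda - \lambda_R \ >\ 0
\]
for every $u$ in the domain of $R$, in particular for all $u \in [0, \gamma_R)$. Consequently the function
\[
 H(x) := \int_0^x \frac{du}{R(u) + \lambda}, \qquad x \in [0, \gamma_R),
\]
is well-defined, of class $C^1$, and strictly increasing, with $H(0) = 0$ and $\lim_{x \nearrow \gamma_R} H(x) = T(\lambda) \in (0, \infty]$; hence $H$ is a $C^1$-diffeomorphism of $[0, \gamma_R)$ onto $[0, T(\lambda))$.

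Next I would set $A(t,\lambda) := H^{-1}(t)$ for $t \in [0, T(\lambda))$. Differentiating $H(A(t,\lambda)) = t$ via the inverse function theorem gives $A'(t,\lambda) = R(A(t,\lambda)) + \lambda$ together with $A(0,\lambda) = 0$, so $A(\cdot,\lambda)$ solves \eqref{eq: A equation} on $[0, T(\lambda))$; uniqueness there follows from local Lipschitz continuity of $R$ on $(-\infty,\gamma_R)$, exactly as in Lemma~\ref{lemma: riccati global existence}. Since $H^{-1}$ is increasing and maps $[0, T(\lambda))$ onto $[0, \gamma_R)$, we immediately obtain $0 = A(0,\lambda) \le A(t,\lambda) < \gamma_R$ as well as $\lim_{t \nearrow T(\lambda)} A(t,\lambda) = \lim_{t \nearrow T(\lambda)} H^{-1}(t) = \gamma_R$, which is \eqref{eq: A convergence to gammaR}.

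For the last claim, if $\gamma_R < \infty$ then the bound $R(u) + \lambda \ge \lambda - \lambda_R$ immediately yields
\[
 T(\lambda) = \int_0^{\gamma_R} \frac{du}{R(u) + \lambda} \ \le\ \frac{\gamma_R}{\lambda - \lambda_R} \ <\ \infty .
\]
I expect the only genuinely delicate point to be the identification of $T(\lambda)$ as the explosion time: one must argue that the solution constructed on $[0, T(\lambda))$ is maximal and that $A(t,\lambda)$ actually approaches $\gamma_R$ rather than stagnating below it. This is covered by the diffeomorphism property of $H$ above; alternatively, since $A' = R(A)+\lambda > 0$ the solution is strictly increasing, hence converges to some $L \le \gamma_R$, and $L < \gamma_R$ would force $A'(t) \to R(L)+\lambda > 0$, contradicting convergence, so $L = \gamma_R$ and, beyond $T(\lambda)$, the branching mechanism $R$ need no longer be finite (precisely when $\int_1^\infty e^{\gamma_R z}\mu(dz) = \infty$), so that the equation leaves the classical regime. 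All remaining steps are routine.
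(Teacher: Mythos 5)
Your proof is correct and follows essentially the same route as the paper: both construct the solution as $A(t,\lambda)=H^{-1}(t)$ with $H(x)=\int_0^x \frac{du}{R(u)+\lambda}$, get uniqueness from local Lipschitz continuity of $R$, and obtain $T(\lambda)<\infty$ for $\gamma_R<\infty$ from the lower bound $R(u)+\lambda\geq\lambda-\lambda_R$. The only difference is that you deduce \eqref{eq: A convergence to gammaR} directly from $H$ being an increasing bijection of $[0,\gamma_R)$ onto $[0,T(\lambda))$, which cleanly replaces the paper's case-by-case contradiction argument (and is preferable to your parenthetical ``alternative'' monotonicity argument, whose step ``$A'(t)\to R(L)+\lambda>0$ contradicts convergence'' is only a contradiction when $T(\lambda)=\infty$).
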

\begin{proof}
 Note that $H(x) = \int_0^x \frac{du}{R(u) + \lambda}$ is well-defined and increasing for $x \in [0, \gamma_R)$ (since the integrand is nonnegative). Hence it has an inverse $H^{-1}: [0,T(\lambda)) \longrightarrow [0,\gamma_R)$ which shows the existence of a solution of \eqref{eq: A equation} with $0 \leq A(t,\lambda) < \gamma_R$. Uniqueness follows from the local Lipschitz continuity of $R$.
 
 For \eqref{eq: A convergence to gammaR}, first consider the case $\gamma_R=\infty$. Assume first that $T(\lambda)=\infty$ holds. Since $\lambda>\lambda_R$, we have $R(u)+\lambda\geq\lambda-\lambda_R>0$ for all $u\in\R$, and hence as $t \to \infty$
 \begin{displaymath}
  A(t,\lambda)=\int_0^t R(A(s,\lambda))+\lambda ds\geq(\lambda-\lambda_R)t\longrightarrow +\infty.
 \end{displaymath}
 Next, assume that $\gamma_R=\infty$ but $T(\lambda)<\infty$. Suppose that \eqref{eq: A convergence to gammaR} does not hold, i.e. there exists $0\leq K< \infty$ and a sequence $\{t_n\}_{n\in\N}$ with $t_n\nearrow T(\lambda)$ such that $A(t_n,\lambda)\leq K$ for all $n\in\N$. Hence,
 \begin{displaymath}
  t_n=\int_0^{A(t_n,\lambda)}\frac{du}{R(u)+\lambda}\leq\int_0^K\frac{du}{R(u)+\lambda}=T(\lambda)-\int_K^\infty\frac{du}{R(u)+\lambda}.
 \end{displaymath}
 Letting $n\to\infty$ gives $\int_K^\infty\frac{du}{R(u)+\lambda}=0$ which is impossible since the integrand is continuous and strictly positive. Thus \eqref{eq: A convergence to gammaR} holds.
 
 Now consider the case $\gamma_R\in(0,\infty)$. Since $R(u) + \lambda \geq \lambda - \lambda_R$, we see that $T(\lambda) \leq \frac{\gamma_R}{\lambda - \lambda_R} < \infty$. Suppose now that \eqref{eq: A convergence to gammaR} does not hold. Since $A(t,\lambda) < \gamma_R$, we find $\e > 0$ and a sequence $(t_n)_{n \in \N}$ such that $t_n \nearrow T(\lambda)$ and $A(t_n,\lambda) < \gamma_R - \e$. Hence
 \begin{align*}
  t_n = \int_0^{A(t_n,\lambda)}\frac{du}{R(u) + \lambda}
  < \int_0^{\gamma_R - \e} \frac{du}{R(u) + \lambda}
  = T(\lambda) - \int_{\gamma_R - \e}^{\gamma_R} \frac{du}{R(u) + \lambda}.
 \end{align*}
 Letting $n \to \infty$ gives $\int_{\gamma_R - \e}^{\gamma_R} \frac{du}{R(u) + \lambda} = 0$ which is impossible since the integrand is continuous and strictly positive. Thus \eqref{eq: A convergence to gammaR} holds.
\end{proof}

A graphic representation of the behaviour of $A^\prime(t,\lambda)=R(A(t,\lambda))+\lambda$ with respect to the different cases of $\lambda$ in the case of $R(u)=\beta u+\frac{\sigma^{2}}{2}u^{2}$ can be found in Figure \ref{graphics_r} in the appendix. This especially illustrates the transition at the critical value $\lambda_R$.

\subsection{Computing the moment generating function}
In this section we suppose that $(X_t^x)_{t \geq 0}$ is a subcritical CBI process with parameters $(b,\beta, \sigma,\nu,\mu)$ satisfying $\gamma_R > 0$ and additionally $\gamma_F > 0$. For fixed $t\in[0,\infty)$, we define the log-moment generating function of the integrated CBI process $Y_t^x$ by
\[
 \Lambda_t(\lambda) = \log\left( \E\left[ e^{\lambda Y^x_t} \right] \right), \qquad \lambda \in \R.
\]
Note that $\Lambda_t(\lambda) \in (-\infty, 0]$ whenever $\lambda \leq 0$. However, if $\E\left[ e^{\lambda Y_t^x} \right] = + \infty$ for some $\lambda > 0$, then also $\Lambda_t(\lambda) = + \infty$. Based on the next result, we characterize $\Lambda_t$ in terms of solutions to a generalized Riccati equation. 
\begin{Proposition}\label{integrated_lt}
 The following assertions hold:
 \begin{enumerate}
  \item[(a)] For each $\lambda \in (-\infty, \lambda_R]$, the affine transformation formula holds for $t \geq 0$:
  \begin{equation}\label{eq: affine trafo}
   \E\left[e^{\lambda \int_0^t X_s^x ds} \right]=\exp \left(x A(t,\lambda) + \int_0^t F(A(s,\lambda))ds \right).
  \end{equation}
  Here $A$ is the unique solution of \eqref{eq: A equation} and the left-hand side is finite if and only if the right-hand side is finite.
  \item[(b)] For $\lambda > \lambda_R$ \eqref{eq: affine trafo} holds for $t \in [0,T(\lambda)]$ with the left side being finite if and only if the right side is finite. For $t > T(\lambda)$ we have
  \begin{align}\label{eq: infinite exponential moment}
   \E\left[e^{\lambda \int_0^t X_s^xds}\right] = + \infty.
  \end{align}
 \end{enumerate} 
\end{Proposition}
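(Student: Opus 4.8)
The plan is to realise the two sides of \eqref{eq: affine trafo} as the endpoints of an explicit exponential local martingale built from the solution $A$ of \eqref{eq: A equation}; alternatively one could invoke the theory of affine processes for the two-dimensional process $(X^x_t,\int_0^t X^x_s\,\dm s)$, but the direct argument is cleaner here. Fix $t>0$ and $\lambda\in\R$, and suppose first that $A(\cdot,\lambda)$ is defined on all of $[0,t]$, which by Lemmas \ref{lemma: riccati global existence} and \ref{lemma: riccati solution explosion} is the case whenever $\lambda\le\lambda_R$, or $\lambda>\lambda_R$ and $t<T(\lambda)$. Consider
\[
 M_s:=\exp\!\left(\lambda\int_0^s X^x_r\,\dm r+A(t-s,\lambda)\,X^x_s+\int_0^{t-s}F(A(r,\lambda))\,\dm r\right),\qquad s\in[0,t],
\]
so that $M_0=\exp\!\big(xA(t,\lambda)+\int_0^t F(A(r,\lambda))\,\dm r\big)$ is the right-hand side of \eqref{eq: affine trafo}, while $M_t=\exp(\lambda\int_0^t X^x_r\,\dm r)$ because $A(0,\lambda)=0$. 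Applying It\^o's formula to $M_s$ along \eqref{SDE} and writing $g(s):=A(t-s,\lambda)$, so that $g'(s)+\lambda=-R(g(s))$, the finite-variation part of $\dm M_s$ equals $M_s$ times the sum of a bracket free of $X^x_s$, namely $bg(s)+\int_0^\infty(e^{g(s)z}-1)\,\nu(\dm z)-F(g(s))$, and a bracket proportional to $X^x_s$, namely $\beta g(s)+\tfrac{\sigma^2}{2}g(s)^2+\int_0^\infty(e^{g(s)z}-1-g(s)z)\,\mu(\dm z)-R(g(s))$; both vanish identically by the L\'evy-Khinchine forms of $F$ and $R$. Thus $M$ is a nonnegative local martingale on $[0,t]$, and localising along $\tau_n:=\inf\{s\ge0:X^x_s\ge n\}$ gives $\E[M_{t\wedge\tau_n}]=M_0$ for every $n$.

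If $\lambda\le0$, then $A(r,\lambda)\le0$ and $F(u)\le bu\le0$ for $u\le0$, whence $0\le M_s\le1$; bounded convergence gives $\E[M_t]=M_0$, i.e.\ \eqref{eq: affine trafo} with both sides finite. If $\lambda>0$, Fatou's lemma applied to $M_{t\wedge\tau_n}\to M_t$ yields
\[
 \E\big[e^{\lambda\int_0^t X^x_r\,\dm r}\big]=\E[M_t]\le M_0=\exp\!\Big(xA(t,\lambda)+\int_0^t F(A(r,\lambda))\,\dm r\Big),
\]
so \eqref{eq: affine trafo} holds as ``$\le$'' and finiteness of the right-hand side forces finiteness of the left-hand side.

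For the reverse inequality --- equivalently, that $\E[e^{\lambda\int_0^t X^x_r\,\dm r}]<\infty$ forces equality in \eqref{eq: affine trafo}, so that both sides are finite --- one must upgrade $M$ to a true martingale on $[0,t]$; this is the technical heart of part (a) and of part (b) for $t<T(\lambda)$. I would carry it out by showing that $\{M_{t\wedge\tau_n}\}_n$ is uniformly integrable whenever the right-hand side of \eqref{eq: affine trafo} is finite. The key ingredients are: (i) on $[0,t]$ the coefficient $A(t-s,\lambda)$ is bounded by $A(t,\lambda)$, which lies strictly below $u_c\wedge\gamma_F$ by Lemma \ref{lemma: riccati global existence} (finiteness of the right-hand side keeping $A(\cdot,\lambda)$ below $\gamma_F$), while $\int_0^{t-s}F(A(r,\lambda))\,\dm r\le c<\infty$; and (ii) the uniform-in-time bounds $\sup_{s\ge0}\E[e^{\kappa X^x_s}]<\infty$ for $\kappa<u_c\wedge\gamma_F$, which follow from \eqref{eq: affine formula} because the relevant solution of $v'=R(v)$ then stays bounded. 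Combining (i)--(ii) with Doob's maximal inequality and a bootstrap in $\lambda$ (exploiting that finite integrated exponential moments persist under a small increase of $\lambda$, by continuity of $\lambda\mapsto T(\lambda)$) gives $\E[\sup_{s\le t}M_s]<\infty$, hence $M_{t\wedge\tau_n}\to M_t$ in $L^1$, $\E[M_t]=M_0$, and \eqref{eq: affine trafo}. Since $t\mapsto\E[e^{\lambda\int_0^t X^x_r\,\dm r}]$ is nondecreasing, this identifies $\{t\ge0:\E[e^{\lambda\int_0^t X^x_r\,\dm r}]<\infty\}$ with $\{t\ge0:A(\cdot,\lambda)\text{ exists on }[0,t]\text{ and }\int_0^t F(A(r,\lambda))\,\dm r<\infty\}$, proving part (a) and part (b) for $t<T(\lambda)$; the endpoint $t=T(\lambda)$ then follows by monotone convergence as $t\nearrow T(\lambda)$ on both sides of \eqref{eq: affine trafo}.

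It remains to show, for part (b), that $\E[e^{\lambda\int_0^t X^x_s\,\dm s}]=+\infty$ when $t>T(\lambda)$, and this I expect to be the main obstacle, since in the borderline regime $\gamma_R<\infty\le\gamma_F$ the right-hand side of \eqref{eq: affine trafo} at $t=T(\lambda)$ may well be finite, so monotone extrapolation in $t$ yields nothing. I would argue by contradiction: if $\E[e^{\lambda\int_0^t X^x_s\,\dm s}]<\infty$, choose $\delta\in(0,t-T(\lambda))$; the Markov property gives $\E[H_{t-\delta}(X^x_\delta)]\le\E[e^{\lambda\int_0^t X^x_s\,\dm s}]<\infty$ with $H_u(y):=\E[e^{\lambda\int_0^u X^y_s\,\dm s}]$, and since $X^x_\delta$ has unbounded support while $H_u$ is nondecreasing in $y$ (branching property of CBI processes), necessarily $H_{t-\delta}(y)<\infty$ for all $y\ge0$. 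Splitting off the branching part, $\phi(u):=\log\E[e^{\lambda\int_0^u Z^1_s\,\dm s}]$ is then finite at $u=t-\delta$, where $Z$ is the continuous-state branching process with branching mechanism $R$. But the branching/semigroup structure of $Z$ (the generator acting on exponentials through $R$) shows, on the interior of the interval-shaped set $\{u:\phi(u)<\infty\}$, that $\phi$ is differentiable with $\phi'(u)=R(\phi(u))+\lambda$ and $\phi(0)=0$, i.e.\ $\phi$ solves \eqref{eq: A equation}; by uniqueness of the maximal solution (Lemma \ref{lemma: riccati solution explosion}) this set is contained in $[0,T(\lambda)]$, contradicting $\phi(t-\delta)<\infty$ with $t-\delta>T(\lambda)$. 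Hence $\E[e^{\lambda\int_0^t X^x_s\,\dm s}]=+\infty$ for all $t>T(\lambda)$, completing the proof.
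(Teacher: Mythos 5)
Your route is genuinely different from the paper's: the paper proves both parts by citing the general affine-process machinery of Keller-Ressel and Mayerhofer (their Theorem 2.14 and Proposition 3.3, restated in the appendix) applied to the two-dimensional process $(X^x_t,\int_0^t X^x_s\,\dm s)$, together with \cite[Chapter 11]{dfs} for $\lambda\le 0$; all the analytic content is outsourced to those references, and the only work done in the paper is matching the lifetime of the Riccati solution with Lemmas \ref{lemma: riccati global existence} and \ref{lemma: riccati solution explosion}. Your exponential local-martingale construction is the standard way to prove those cited results from scratch, your It\^o computation and the bounded-convergence argument for $\lambda\le 0$ are correct, and Fatou correctly gives the inequality ``$\le$'' for $\lambda>0$. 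However, as written the proposal has genuine gaps precisely at the points you flag as ``the technical heart.''

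First, the uniform-integrability step is not closed. Doob's maximal inequality does not apply directly to the nonnegative local martingale $M$ (a supermartingale), and the quantity you must control is $\E\bigl[\sup_{s\le t}\exp(A(t-s,\lambda)X^x_s)\bigr]$, i.e.\ an exponential moment of the \emph{running supremum} of $X^x$; the bound $\sup_{s\ge0}\E[e^{\kappa X^x_s}]<\infty$ from \eqref{eq: affine formula} does not yield this without an additional argument, and the ``bootstrap in $\lambda$'' only controls the $\lambda\int_0^t X^x_r\,\dm r$ part of the exponent. Second, the equivalence ``LHS finite iff RHS finite'' is not fully established: when $A(\cdot,\lambda)$ exists on $[0,t]$ but $\int_0^t F(A(s,\lambda))\,\dm s=+\infty$ (which happens whenever $\gamma_F<y(\lambda)$, a case the paper explicitly uses in the proof of Proposition \ref{limit_mgf}), your martingale is not even defined, Fatou gives nothing, and neither your UI argument nor your final contradiction argument (which only treats explosion of $A$, i.e.\ $t>T(\lambda)$) shows that the left-hand side is then infinite; one needs something like the cluster decomposition of the CBI process into independent immigration contributions to force divergence. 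Third, the concluding step asserts that $\phi(u)=\log\E[e^{\lambda\int_0^u Z^1_s\,\dm s}]$ solves \eqref{eq: A equation} on the interior of its finiteness set ``by the branching/semigroup structure'' --- but that is essentially the statement being proved (for the CB part without immigration) and is exactly the content of the cited Keller-Ressel--Mayerhofer results; it cannot be invoked without proof inside a from-scratch argument. Each gap is fixable, but none is routine, so the proposal is an outline of a valid alternative proof rather than a complete one.
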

\begin{proof}
 (a) For $\lambda \leq 0$ the assertion is an immediate consequence of \cite[Chapter 11]{dfs}. In this case both sides are clearly finite. For $\lambda \in (0,\lambda_R]$ we note that \eqref{eq: A equation} has a unique global solution $A(t,\lambda) \in [0, y(\lambda))$ for $t>0$ due to Lemma \ref{lemma: riccati global existence} and hence the assertion follows from \cite[Theorem 2.14 and Proposition 3.3]{keller-ressel_mayerhofer} (for convenience of the reader a formulation of their result is given in the appendix).
 
 (b) Suppose that $\lambda > \lambda_R$. Then the affine transformation formula holds on $[0,T(\lambda)]$ due to Lemma \ref{lemma: riccati solution explosion} combined with \cite[Theorem 2.14 and Proposition 3.3]{keller-ressel_mayerhofer}. For $t > T(\lambda)$ we obtain \eqref{eq: infinite exponential moment} from \cite[Proposition 3.3]{keller-ressel_mayerhofer} and the fact that \eqref{eq: A equation} has no solution for $t > T(\lambda)$. This proves the assertion.
\end{proof}

Next, we study the limit of the scaled moment generating function 
\[
 \Lambda(\lambda) = \lim_{t \rightarrow \infty} \frac{1}{t} \Lambda_{t}(t\lambda) 
 = \lim_{t \to \infty} \frac{1}{t}\log\left(\E\left( e^{\lambda \int_0^t X_s^x ds } \right) \right)
\]
for each $\lambda \in \R$. More precisely, we prove the following proposition.
\begin{Proposition}\label{limit_mgf}
 For each $\lambda \in \R$, we have 
 \begin{equation*}
 \Lambda(\lambda)=\begin{cases}
  F(y(\lambda)), & \text{ if } \lambda \in (-\infty, \lambda_c]
  \\ +\infty, & \text{ if } \lambda >\lambda_{c}.
\end{cases}
\end{equation*}
where $F(y(\lambda_c)) > - \infty$.
\end{Proposition}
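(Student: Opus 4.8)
The plan is to read $\Lambda(\lambda)$ off directly from the affine transformation formula of Proposition~\ref{integrated_lt}, feeding in the limiting behaviour of $A(\cdot,\lambda)$ supplied by Lemmas~\ref{lemma: riccati global existence} and~\ref{lemma: riccati solution explosion}. Since $\int_0^t X_s^x\,ds = tY_t^x$, Proposition~\ref{integrated_lt}(a) gives, for every $\lambda\le\lambda_R$ and every $t\ge 0$,
\[
 \frac{1}{t}\Lambda_t(t\lambda)
 = \frac{1}{t}\log\E\left[e^{\lambda\int_0^t X_s^x\,ds}\right]
 = \frac{xA(t,\lambda)}{t} + \frac{1}{t}\int_0^t F(A(s,\lambda))\,ds ,
\]
with the two sides simultaneously finite or infinite. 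As $\lambda_c=\min\{\lambda_F,\lambda_R\}\le\lambda_R$, this representation is available throughout $\lambda\le\lambda_c$ and, when that interval is nonempty, on $(\lambda_c,\lambda_R]$; for $\lambda>\lambda_R$ I will instead use Proposition~\ref{integrated_lt}(b). The only extra ingredient is the elementary Ces\`aro fact that a locally integrable $g\colon[0,\infty)\to(-\infty,\infty]$, bounded below, with $g(s)\to L\in(-\infty,\infty]$ satisfies $\frac1t\int_0^t g(s)\,ds\to L$.

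\emph{The case $\lambda\le\lambda_c$.} By Lemma~\ref{lemma: riccati global existence} the solution $A(\cdot,\lambda)$ is bounded (it converges to $y(\lambda)$), so $xA(t,\lambda)/t\to 0$. Next I would argue $F(A(s,\lambda))\to F(y(\lambda))$ as $s\to\infty$: the separation-of-variables formula $A(t,\lambda)=H^{-1}(t)$ from the proof of Lemma~\ref{lemma: riccati global existence} shows $A(\cdot,\lambda)$ is monotone (increasing for $\lambda>0$, constant for $\lambda=0$, decreasing for $\lambda<0$) with limit $y(\lambda)$, while $u\mapsto F(u)=bu+\int_0^\infty(e^{uz}-1)\nu(dz)$ is continuous on $(-\infty,\gamma_F)$ and, by monotone convergence in the defining integral, left-continuous at $\gamma_F$ in the sense that $F(u)\uparrow F(\gamma_F)\in[0,\infty]$ as $u\uparrow\gamma_F$; since by construction $y(\lambda)\le y(\lambda_c)=\min\{u_c,\gamma_F\}\le\gamma_F$, the stated convergence follows, the limit being finite except possibly at $\lambda=\lambda_c$. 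The Ces\`aro lemma then yields $\Lambda(\lambda)=F(y(\lambda))$, including at the boundary $\lambda=\lambda_c$ where $F(y(\lambda_c))$ may equal $+\infty$.

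\emph{The case $\lambda>\lambda_c$.} If $\lambda>\lambda_R$, Proposition~\ref{integrated_lt}(b) gives $\E[e^{\lambda\int_0^t X_s^x\,ds}]=+\infty$ for every $t>T(\lambda)$, hence $\frac1t\Lambda_t(t\lambda)=+\infty$ for all large $t$ and $\Lambda(\lambda)=+\infty$. If instead $\lambda_c<\lambda\le\lambda_R$ --- which forces $\lambda_c=\lambda_F<\lambda_R$, i.e. $0<\gamma_F<u_c$ --- then Lemma~\ref{lemma: riccati global existence} gives $A(s,\lambda)\nearrow y(\lambda)>y(\lambda_F)=\gamma_F$, so $A(s,\lambda)>\gamma_F$ for all $s$ past some $s_0$; since $F(u)=+\infty$ for $u>\gamma_F$ (there $\int_1^\infty e^{uz}\nu(dz)=+\infty$), the integrand $F(A(s,\lambda))$ is $+\infty$ on $(s_0,t)$, whence $\frac1t\int_0^t F(A(s,\lambda))\,ds=+\infty$ for $t>s_0$ and $\Lambda(\lambda)=+\infty$ once more.

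\emph{Main obstacle.} The delicate point is the boundary value $\lambda=\lambda_c$ in the first case: one must track which of $\lambda_F$, $\lambda_R$ attains the minimum, use monotonicity of $A(\cdot,\lambda_c)$ toward $y(\lambda_c)=\min\{u_c,\gamma_F\}$, and pass $F$ through the limit by monotone convergence precisely when $y(\lambda_c)=\gamma_F$ and $F(\gamma_F)=+\infty$ --- the scenario in which each $F(A(s,\lambda_c))$ is finite yet the Ces\`aro average still diverges to $+\infty$. Everything else is routine bookkeeping with the representation already in hand.
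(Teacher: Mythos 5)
Your overall strategy -- read $\Lambda$ off the affine formula of Proposition \ref{integrated_lt}, feed in the asymptotics of $A(\cdot,\lambda)$ from Lemmas \ref{lemma: riccati global existence} and \ref{lemma: riccati solution explosion}, and finish with a Ces\`aro argument -- is exactly the paper's, and your treatment of $\lambda\le\lambda_c$ (including the delicate boundary case $F(y(\lambda_c))=+\infty$, which you handle by monotone convergence of $F(A(s,\lambda_c))\uparrow F(\gamma_F)$ where the paper uses an equivalent $\e$-shift $A(s,\lambda)>y(\lambda)-\e$) and of $\lambda_c<\lambda\le\lambda_R$ is sound.

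There is, however, a genuine gap in the case $\lambda>\lambda_R$. You conclude $\Lambda(\lambda)=+\infty$ solely from Proposition \ref{integrated_lt}(b), which yields $\E[e^{\lambda\int_0^t X_s^x ds}]=+\infty$ only for $t>T(\lambda)$. But Lemma \ref{lemma: riccati solution explosion} gives $T(\lambda)\in(0,\infty]$ and asserts $T(\lambda)<\infty$ only when $\gamma_R<\infty$; when $\gamma_R=+\infty$ the explosion time $T(\lambda)=\int_0^\infty \frac{du}{R(u)+\lambda}$ is not ruled out to be infinite, in which case there is no $t>T(\lambda)$ and your argument says nothing. The paper devotes a separate block of the proof to precisely this situation: when $T(\lambda)=+\infty$ one has $A(t,\lambda)\nearrow+\infty$, and one must show directly that the right-hand side of the affine formula grows superlinearly -- for $F\neq 0$ by bounding $\frac1t\int_0^t F(A(s,\lambda))\,ds\geq\frac12 F(A(t/2,\lambda))\to+\infty$ using monotonicity of $A$ and a lower bound $F(u)\ge bu+\nu([a,\infty))(e^{au}-1)$, and for $F=0$ by showing $A(t,\lambda)/t\to+\infty$ via l'Hospital. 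Alternatively you could close the gap by proving $T(\lambda)<\infty$ outright: under \eqref{eq: non degeneracy} and $\gamma_R=+\infty$ one has $\int_0^\infty z^2\mu(dz)<\infty$ and $e^{uz}-1-uz\ge u^2z^2/2$ for $u\ge0$, so $R(u)+\lambda\ge \beta u+\frac{u^2}{2}\bigl(\sigma^2+\int_0^\infty z^2\mu(dz)\bigr)+\lambda$ grows quadratically and the defining integral for $T(\lambda)$ converges. Either repair is short, but as written the step fails, and relatedly you never address the case $F=0$, which the paper treats separately for exactly this reason.
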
 
\begin{proof}
 For $\lambda > \lambda_c$ and $F \neq 0$, we first assume that $\lambda_R \leq \lambda_F$ so that $\lambda_c = \lambda_R$. If $T(\lambda)<\infty$,
 then Proposition \ref{integrated_lt}.(b) yields for $t > T(\lambda)$ that $\Lambda_t(t\lambda) = \infty$ and hence $\Lambda(\lambda) = \infty$. If $T(\lambda)=+\infty$, then Lemma \ref{lemma: riccati solution explosion} implies that $\gamma_R=+\infty$ and $\lim_{t\to\infty}A(t,\lambda)=+\infty$. If $b=0$, let $a> 0$ such that $\nu([a,\infty))>0$ (otherwise, $a$ can be chosen arbitrarily). Since $\lambda>\lambda_c$, $A$ is increasing and hence,
 \begin{align*}
  \frac{1}{t}\int_0^tF(A(s,\lambda))ds
  &=\frac{1}{t}\int_0^{\frac{t}{2}}F(A(s,\lambda))ds+\frac{1}{t} \int_{\frac{t}{2}}^tF(A(s,\lambda))ds
  \\
  &\geq\frac{1}{t}\int_{\frac{t}{2}}^tF(A(t/2,\lambda))ds
  \\
  &=\frac{1}{2}F(A(t/2,\lambda))
  \\
  &\geq \frac{b}{2}A(t/2,\lambda)+\frac{1}{2}\nu([a,\infty))(e^{aA(t/2,\lambda)}-1) \xrightarrow{t\to\infty} +\infty.
 \end{align*}
 If $\lambda_F<\lambda_R$, then $\lambda > \lambda_c = \lambda_F$. For $\lambda > \lambda_R > \lambda_F$ we can argue as above (since $\Lambda_t(t\lambda) = +\infty$ for $t > T(\lambda)$). Assume that $\lambda \in (\lambda_F, \lambda_R]$. Then noting that $y(\lambda_F)=\gamma_F$, that $\lambda\mapsto y(\lambda)$ is strictly increasing, and using Lemma \ref{lemma: riccati global existence} so that $A(t,\lambda) \nearrow y(\lambda) > \gamma_F$, we find $t_0 \geq 0$ such that $A(t,\lambda)>\gamma_F$ holds for all $t\geq t_0$. Hence, $F(A(t,\lambda))=+\infty$ for all $t\geq t_0$. Therefore, the claim is also true by Proposition \ref{integrated_lt}.(a).
 
 For $\lambda > \lambda_c$ and $F = 0$, we consider first the case where $T(\lambda)<\infty$. Then $\Lambda_t(t \lambda) = +\infty$ and hence $\Lambda(\lambda) = + \infty$ whenever $t > T(\lambda)$. For $T(\lambda) = + \infty$ we have $A(t,\lambda) \nearrow +\infty$ and hence we obtain from l'Hospital $\lim_{t\to\infty}\frac{A(t,\lambda)}{t}=\lim_{t\to\infty}R(A(t,\lambda))+\lambda=\infty$.
 
 So assume that $\lambda \leq \lambda_c$. In this case we find by Proposition \ref{integrated_lt}.(a)
 \begin{align}\label{eq: 02}
  \Lambda(\lambda) = \lim_{t \rightarrow \infty} \frac{1}{t}  \left( x A(t,\lambda)+ \int_0^t F(A(s,\lambda))ds\right),
 \end{align}
 where $A$ is given by \eqref{eq: A equation} and $\int_0^t F(A(s,\lambda))ds$ is finite for each $t \geq 0$. Noting that $\lim_{t \to \infty}A(t,\lambda) = y(\lambda)$, we obtain 
 \begin{align}\label{eq: 01}
  \lim_{t \to \infty}\frac{A(t,\lambda)}{t} = 0.
 \end{align}
 If $F(y(\lambda)) < \infty$, then it not difficult to see that $\lim_{t \to \infty}\frac{1}{t}\int_0^t F(A(s,\lambda))ds = F(y(\lambda))$, and hence 
 \[
  \lim_{t \rightarrow \infty} \frac{1}{t}  \left( x A(t,\lambda)+ \int_0^t F(A(s,\lambda))ds\right)
  = F(y(\lambda)) \in \R.
 \]
 Suppose now that $F(y(\lambda)) = + \infty$. Then we necessarily have $\lambda = \lambda_c = \lambda_F$ and $\int_1^{\infty}e^{\gamma_F z}\nu(dz) = \infty$. Take $\e > 0$ arbitrary. Since $A(s,\lambda) \to y(\lambda) = \gamma_F$, we find some $T > 0$ such that 
 $A(s,\lambda) > y(\lambda) - \e$ for $s \geq T$. Hence we obtain for $t \geq T$
 \[
  \frac{1}{t}\int_0^t F(A(s,\lambda))ds 
  \geq \frac{1}{t}\int_{T}^{t}F(y(\lambda) - \e)ds
  \geq \frac{t - T}{t}\int_1^{\infty}e^{(y(\lambda)-\e)z}\nu(dz).
 \]
 Letting $t \to \infty$ shows that 
 \[
  \liminf_{t \to \infty}\frac{1}{t}\int_0^t F(A(s,\lambda))ds \geq \int_1^{\infty}e^{(y(\lambda)-\e)z}\nu(dz), \qquad \forall \e > 0.
 \]
 Letting $\e \searrow 0$ proves that $\frac{1}{t} \int_0^t F(A(s,\lambda))ds \to \infty$. This combined with \eqref{eq: 02} and \eqref{eq: 01} proves that $\Lambda(\lambda) = F(y(\lambda))$.
\end{proof}

\section{Proofs of the main results}

\subsection{Proof of Theorem \ref{main_result}}

\begin{proof}[Proof of Theorem \ref{main_result}]
 It follows from Proposition \ref{limit_mgf} that
 $\Lambda(\lambda)$ is exists in $(-\infty,\infty]$ for each $\lambda \in \R$. Moreover, we have $\Lambda(\lambda) = F(y(\lambda)) < \infty$ whenever $\lambda \in (-\infty, \lambda_c) =: \mathcal{D}_{\Lambda}^{\circ}$. According to \cite[Lemma 2.3.9]{dembo_zeitouni}, $\Lambda$ is convex and $\Lambda^{\ast}$ is a good rate function. Applying the G\"artner-Ellis Theorem (see \cite[Theorem 2.3.6.(a) and (b)]{dembo_zeitouni}) yields 
 \begin{align*}
  - \inf_{y \in A^{\circ} \cap \mathcal{E}} \Lambda^{\ast}(y) 
  &\leq \liminf_{t \to \infty}\frac{1}{t}\log\P\left[ Y^x_t \in A \right] 
  \\ &\leq \limsup_{t \to \infty}\frac{1}{t}\log\P\left[ Y^x_t \in A \right] \leq - \inf_{y \in \overline{A}}\Lambda^{\ast}(y)
 \end{align*}
 where $\mathcal{E}$ denotes the set of exposed points of $\Lambda^{\ast}$ whose exposing hyperplane belongs to $(-\infty, \lambda_c)$ (see \cite[Definition 2.3.3]{dembo_zeitouni}). Note that 
 \[
  \alpha \geq - \frac{F'(y(0))}{R'(y(0))} = m > 0.
 \]
 Below we prove \eqref{eq: estimate LDP} by showing that
 \[
  - \inf_{y \in A^{\circ} \cap (0,\alpha)}\Lambda^{\ast}(y) \leq - \inf_{y \in A^{\circ} \cap \mathcal{E}}\Lambda^{\ast}(y).
 \]
 For this purpose it suffices to show that $(0,\alpha) \subset \mathcal{E}$. So let $y \in (0,\alpha)$. Since by Proposition \ref{prop: y properties}, $\eta \longmapsto \Lambda^\prime(\eta)=-\frac{F^\prime(y(\eta))}{R^\prime(y(\eta))}>0$ is continuous on $(-\infty,\lambda_c)$, and we have $\lim_{\eta\to-\infty}\Lambda^\prime(\eta)=0$, we find some $\widetilde{\eta} \in (-\infty, \lambda_c)$ such that $y=\Lambda^\prime(\widetilde{\eta})$. Applying \cite[Lemma 2.3.9]{dembo_zeitouni} yields $y \in \mathcal{E}$. Thus we have shown that $(0,\alpha) \subset \mathcal{E}$.
 
 Finally, we prove that $\Lambda^{\ast}(m) = 0$, and $\Lambda^{\ast}(x) > 0$ for $x \geq 0$ with $x \neq m$. For fixed $x\in\R$, set $\varphi_x(\lambda)=\lambda x-F(y(\lambda))$ and recall that $\lambda\mapsto y(\lambda)$ is strictly increasing and continuous. Hence it has an inverse $\lambda: (-\infty, y_c) \longrightarrow (-\infty,\lambda_c)$ where $y_c = \min\{u_c,\gamma_F\}$. Define the function $\psi_x(y)=\varphi_x(\lambda(y))=\lambda(y)x-F(y)$.
 Hence, we have 
 \[
  \Lambda^{\ast}(x) = \sup_{y \leq y_c} \psi_x(y).
 \]
 Since $\lambda'(y) = \frac{1}{y^\prime(\lambda(y))}$, the derivatives of $\psi_x$ are given by
 $\psi_x^\prime(y) = -R^\prime(y)x-F^\prime(y)$ and $\psi_x^{\prime\prime}(y) = -R^{\prime\prime}(y)x-F^{\prime\prime}(y)$.
 Assume first that $\nu\neq 0$. We have for each $x \geq 0$
 \begin{align*}
  \psi_x''(y) &= - \left( \sigma^2 + \int_0^{\infty}e^{y z}z^2 \mu(dz)\right)x - \int_{0}^{\infty}z^2 e^{y z}\nu(dz) 
  \\ &\leq - \int_{0}^{\infty}z^2 e^{y z}\nu(dz)  < 0.
 \end{align*}
 Thus $\psi_x$ is strictly concave for each $x \geq 0$. Since $\lambda(0) = 0$, we find that
 \begin{equation}\label{global_max}
  \psi_m^\prime(0) = -R^\prime(0)m-F^\prime(0)=-\beta m-\Big(b+\int_0^\infty z\nu(dz)\Big) = 0.
 \end{equation}
 Thus $y = 0$ is a global maximum for $\psi_m$ which proves that $\Lambda^{\ast}(m) = \psi_m(0) = 0$. On the other hand, if $x \geq 0$ is such that $x\neq m$, we have 
 $\Lambda^{\ast}(x) \geq \psi_x(0) = 0$. 
 By replacing $m$ with $x$ in \eqref{global_max}, it is easy to see that $\psi^\prime_x(0)\neq 0$. Therefore, necessarily $\psi_x(y) > 0$ holds for $y < 0$ or $y > 0$ close enough to zero. For such values of $y$ we obtain $\Lambda^{\ast}(x) \geq \psi_x(y) > 0$ which proves the assertion.
 
 Finally, consider the case $\nu=0$. Since we assumed $F\neq 0$, we necessarily have $b>0$. For $x>0$, we have
 \begin{displaymath}
  \psi_x^{\prime\prime}(y)=- \left( \sigma^2 + \int_0^{\infty}e^{y z}z^2 \mu(dz)\right)x - \int_{0}^{\infty}z^2 e^{y z}\nu(dz)   < 0
 \end{displaymath}
 by \eqref{eq: non degeneracy} and we may proceed as in the case $\nu\neq 0$. For $x=0$, note that
 \begin{displaymath}
  \Lambda^\ast(0)=\sup_{y\leq y_c}\psi_0(y)=\sup_{y\leq y_c}-by=+\infty.
 \end{displaymath}
 Hence, the assertion is also true in this case.
\end{proof}

\subsection{Proof of Remark \ref{remark: F zero}}

\begin{proof}[Proof of Remark \ref{remark: F zero}]
 If $F = 0$, then it follows from Proposition \ref{limit_mgf} that
 \[
  \Lambda(\lambda) = \begin{cases}0, & \lambda \leq \lambda_c \\ +\infty, & \lambda > \lambda_c.
  \end{cases}
 \]
 Applying the G\"artner-Ellis Theorem (see \cite[Theorem 2.3.6.(a) and (b)]{dembo_zeitouni}) yields 
 \begin{align*}
  - \inf_{y \in A^{\circ} \cap \mathcal{E}} \Lambda^{\ast}(y) 
  &\leq \liminf_{t \to \infty}\frac{1}{t}\log\P\left[ Y^x_t \in A \right] 
  \\ &\leq \limsup_{t \to \infty}\frac{1}{t}\log\P\left[ Y^x_t \in A \right] \leq - \inf_{y \in \overline{A}}\Lambda^{\ast}(y)
 \end{align*}
 where $\mathcal{E}$ denotes the set of exposed points of $\Lambda^{\ast}$ whose exposing hyperplane belongs to $(-\infty, \lambda_c)$. It is easy to see that $\mathcal{E} = \emptyset$ and hence $\inf_{y \in A^{\circ} \cap \mathcal{E}}\Lambda^{\ast}(y) = \infty$. This proves the asserted statement.
\end{proof}

\subsection{Proof of Corollary \ref{cor: main_result}}

\begin{proof}[Proof of Corollary \ref{cor: main_result}]
 In view of Theorem \ref{main_result} and the G\"artner-Ellis Theorem \cite[Theorem 2.3.6.(c)]{dembo_zeitouni} it suffices to show that $\Lambda$ is essentially smooth and lower semi-continuous.
 
 \textit{$\Lambda$ is an essentially smooth function.} We have already shown that $\Lambda(\lambda) = F(y(\lambda)) < \infty$ for $\lambda < \lambda_c$. Moreover, $\Lambda$ is differentiable throughout $\mathcal{D}_{\Lambda}^{\circ}$ since $y(\lambda)$ is differentiable in $\lambda$, $y(\lambda) < y(\lambda_c) \leq \gamma_F$ and since the L\'evy measure $\nu$ appearing in the definition of $F$ has finite exponential moments up to order $\gamma_F$. It remains to show that $\Lambda$ is steep. Take the only boundary point of $\mathcal{D}_{\Lambda}^{\circ}$, i.e., $\lambda_{c}$. Let $(\lambda_{n})_n$ be a sequence with $\lambda_{n}\nearrow \lambda_{c}$. We obtain 
 \begin{align*}
  |\Lambda^\prime(\lambda_n)|
  = |y^\prime(\lambda_n)||F^\prime(y(\lambda_n))|
  = \frac{|F'(y(\lambda_n))|}{|R'(y(\lambda_n))|} \longrightarrow +\infty
 \end{align*}
 due to assumption \eqref{eq: boundary condition}.
 
 \textit{$\Lambda$ is a lower semicontinuous function.} We consider the different parts of the domain of $\Lambda$ separately. In each case, let $\{\lambda_n\}_n\subset\R$ be a sequence converging to $\lambda$. Our goal is to show that
 \begin{displaymath}
  \liminf_{n\to\infty}\Lambda(\lambda_n)\geq\Lambda(\lambda).
 \end{displaymath}
 For $\lambda>\lambda_c$, we have $\Lambda(\lambda)=\infty$ as well as $\Lambda(\lambda_n)=\infty$ for $n$ large enough. For $\lambda<\lambda_c$, we have $\Lambda(\lambda)=F(y(\lambda))$. As we have shown in Proposition \ref{prop: y properties}, this function is differentiable for such $\lambda$. Hence, it is continuous and therefore also lower semicontinuous.
 For $\lambda=\lambda_c$, note that if $\lambda_n>\lambda_c$, we have $\Lambda(\lambda_n)=\infty\geq\Lambda(\lambda_c)$.
 Hence, it suffices to consider sequences $\{\lambda_n\}$ with $\lambda_n\leq\lambda_c$ for all $n\in\N$. But then, we again have $\Lambda(\lambda_n)=F(y(\lambda_n))$ and $\Lambda(\lambda)=F(y(\lambda))$. Since $y(\lambda_n) \longrightarrow y(\lambda_c)=u_c$ and $F$ is continuous on $(-\infty,u_c]$ in the extended real-line with $F(y(\lambda_c)) \in [0,\infty]$, we conclude the assertion.
\end{proof}

\appendix

\section{Proofs of Lemma \ref{minimum_u_c} and Proposition \ref{prop: y properties}}
We start with the proof of Lemma \ref{minimum_u_c}.
\begin{proof}[Proof of Lemma \ref{minimum_u_c}]
 Note that $R^\prime(u)=\beta+\sigma^2u+\int_0^\infty(ze^{uz}-z)\mu(dz)$, and set 
\begin{equation}\label{r_prime_eqn}
 -\beta=\sigma^2u+\int_0^\infty(ze^{uz}-z)\mu(dz)=:\Phi(u).
\end{equation}
A possible minimum is the solution to equation \eqref{r_prime_eqn}. We consider two cases:
 
 \textit{Case 1:} $-\beta < \Phi(\gamma_R)\in(0,\infty]$: In this case, equation \eqref{r_prime_eqn} has a solution $u_c \in (0, \gamma_R)$ by continuity\footnote{Note that by monotone convergence, the case $\Phi(\gamma_R)=+\infty$ always implies that $\lim_{u\to\gamma_R}\Phi(u)=+\infty$.} and intermediate value theorem. Since $\Phi^\prime(u) = R''(u) > 0$ for all $u\in[0,\gamma_R)$ (see \eqref{eq: non degeneracy}), this is the only solution. Due to strict convexity of $R$, this is also a global minimum.
 
 \textit{Case 2:} $\Phi(\gamma_R)\leq-\beta$: In this case, we have 
 \begin{displaymath} 
  R^\prime(u)=\beta+\Phi(u) < \beta + \Phi(\gamma_R) \leq 0
 \end{displaymath} 
 for all $u\in(-\infty,\gamma_R)$ where the strict inequality follows from $\Phi'(u) > 0$. Therefore, $R$ is strictly decreasing and its minimum is attained at $u_c=\gamma_R$. ($R^\prime(\gamma_R)<\infty$ by assumption of this case.)

 For the last assertion, $\lambda_R > 0$, we note that by  $R(0)=0$ and since $u_c$ is the global minimum, we have $R(u_c)\leq R(0)=0$. Assuming that $R(u_c)=0$, we obtain from the strict convexity $R\big(\frac{1}{2}\cdot 0+\frac{1}{2}u_c\big)<\frac{1}{2}R(0)+\frac{1}{2}R(u_c)=0$ and hence $R(u_c/2)<0=R(u_c)$. This contradicts the fact that $u_c$ is a minimum. Thus we have shown that $R(u_c)<0$.
\end{proof}
Next we prove Proposition \ref{prop: y properties}.
\begin{proof}[Proof of Proposition \ref{prop: y properties}]
 First we have for all $u \in (- \infty, \gamma_R)$ and $\lambda > \lambda_R$
 \[
  R(u) + \lambda = R(u) + \lambda_R + (\lambda - \lambda_R) \geq \lambda - \lambda_R > 0
 \]
 and hence \eqref{equilibria} has no solution. However, for $\lambda \in (- \infty, \lambda_R]$, equation \eqref{equilibria} has at least one solution. Indeed, when $\lambda \in (0, \lambda_R]$ then noting that $R(0) + \lambda = \lambda > 0$ and $R(u_c) + \lambda = - \lambda_R + \lambda \leq 0$ shows that \eqref{equilibria} has a solution $y(\lambda) \in (0, u_c]$. When $\lambda = 0$, then we can take $y(\lambda) = 0$. Finally, for $\lambda < 0$ we have $R(0) + \lambda < 0$ and $R(u) \geq |\beta||u|$ so that $\lim_{u \to - \infty}R(u) = + \infty$ and hence \eqref{equilibria} has a solution $y(\lambda) \in (- \infty,0)$. Since $R$ is strictly convex, one can check that $y(\lambda)$ is the smallest solution of \eqref{equilibria}.
 
 Define the function $f(u,\lambda) = R(u) + \lambda$, where $u \in (-\infty, \gamma_R)$ and $\lambda < \lambda_R$. Then $f(y(\lambda), \lambda) = 0$. Moreover, since $y(\lambda)<u_c$ we obtain $\frac{\partial f(y(\lambda),\lambda)}{\partial u} \neq 0$ and the solution $y$ is continuously differentiable by the implicit function theorem. Differentiating $f(y(\lambda),\lambda)=0$ in $\lambda$ and solving for $y'$ gives \eqref{eq: y0 derivative}. 
 Since $\lim_{u \to - \infty}R(u) = +\infty$ and $R$ is strictly convex, it need to be decreasing in a neighborhood of $y(\lambda)$, i.e. $R'(y(\lambda)) < 0$. Thus $y'(\lambda) > 0$. Finally, we have $y(\lambda_R) = u_c > 0$. 
 
 It is left to show that $y$ is continuous in $\lambda_R$. Since $y(\lambda)<u_c$ for all $\lambda \in (0,\lambda_R)$, we obtain from \eqref{eq: y0 derivative} and $y(0) = 0$ that $y(\lambda) = \int_0^{\lambda} \frac{-1}{R'(y(\xi))}d\xi$. Since $R'(y(\xi)) < 0$, the function $y$ is strictly increasing and hence $y_R := \lim_{\lambda \nearrow \lambda_R}y(\lambda) \in [0,\infty]$ is well-defined. Since $y(\lambda) \leq y(\lambda_R) = u_c$ we conclude that $y_R < \infty$. By construction of $y(\lambda)$, we have $R(y(\lambda)) + \lambda = 0$. Since $R$ is continuous on $(-\infty, y_R] \subset (-\infty, u_c]$, we can pass to the limit and find that $R(y_R) + \lambda_R = 0$. Since $R(u) + \lambda_R \geq 0$ holds for each $u \in [0, \gamma_R)$ and $R$ is strictly convex, this equation has only one solution. Thus $y_R = y(\lambda_R)$. This proves the assertion.
\end{proof}
 
 \begin{figure}[!ht]
  \centering
  \begin{minipage}[b]{0.4\linewidth}
    \includegraphics[width=\linewidth]{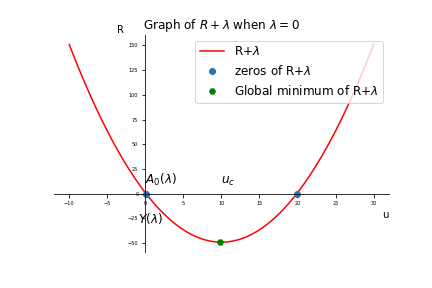}
    %\caption{For $\lambda<0$.}
  \end{minipage}
  \hfill
  \begin{minipage}[b]{0.4\linewidth}
    \includegraphics[width=\linewidth]{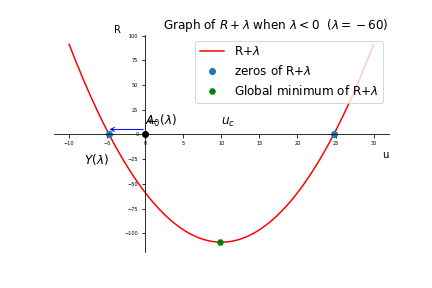}
    %\caption{For $0\leq \lambda\leq \lambda_{c}$.}
  \end{minipage}
  \hfill
  \begin{minipage}[b]{0.4\linewidth}
    \includegraphics[width=\linewidth]{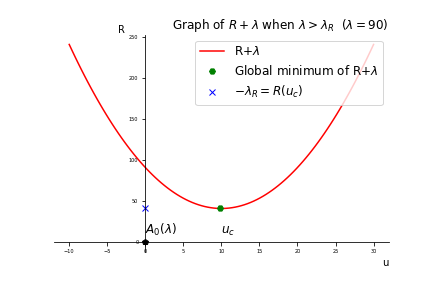}
    %\caption{For $0\leq \lambda\leq \lambda_{c}$.}
  \end{minipage}
  \hfill
  \begin{minipage}[b]{0.4\linewidth}
    \includegraphics[width=\linewidth]{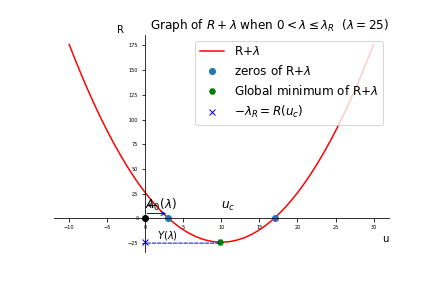}
    %\caption{For $0\leq \lambda\leq \lambda_{c}$.}
  \end{minipage}
  \caption{The shape of $R(u)+\lambda$ as appearing on the right-hand side of \eqref{eq: A equation} as well as the corresponding behaviour of $A(t,\lambda)$ for the different regimes of $\lambda$.}
  \label{graphics_r}
\end{figure}

\section{Results from \cite{keller-ressel_mayerhofer}}

Below we state a special case of the results obtained in \cite{keller-ressel_mayerhofer}.
\begin{Theorem}[\cite{keller-ressel_mayerhofer}, Thm. 2.14 and Prop. 3.3]
Let $(X_t^x)$ be a CBI process on $\R_+$ and let $(Z_t)$ be defined as the integrated process
\begin{displaymath}
 Z_t=\int_0^t X_s^x ds.
\end{displaymath}
 Then the two-dimensional process $\xi=(X,Z)$ satisfies for $T\geq 0$ following statements:
\begin{enumerate}
    \item Let $\lambda\in\R^2$ and suppose that
    \begin{displaymath}
     \E^x[e^{\langle\lambda,\xi_T\rangle}] < \infty
    \end{displaymath}
    for some $x\in(0,\infty)^2$. Then there exists a unique solution $(v,w)$ up to time $T$ of the Riccati system
    \begin{equation}\label{appendix_riccati}
    \begin{split}
        \frac{\partial}{\partial t}v(t,\lambda)&=\bar{R}(v(t,\lambda)),\ v(0,\lambda)=\lambda
        \\
        \frac{\partial}{\partial t}w(t,\lambda)&=\bar{F}(v(t,\lambda)),\ w(0,\lambda)=0
    \end{split}
    \end{equation}
    such that
    \begin{equation}\label{affine_transformation}
     \E^x[e^{\langle\lambda,\xi_t\rangle}] = \exp(\langle x,v(t,\lambda)\rangle+w(t,\lambda))
    \end{equation}
    holds for all $x\in[0,\infty)$ and $t\in[0,T]$, where $\bar{R}\colon\R^2\to\R^2,\bar{F}\colon\R^2\to\R$ are given by
    \begin{displaymath}
     \bar{R}_1(u)=R(u_1)+u_2,\ \bar{R}_2(u)=0,\ \bar{F}(u)=F(u_1).
    \end{displaymath}
    \item Suppose that the Riccati system \eqref{appendix_riccati} has solutions $(v,w)$ starting at $(\lambda_1,\lambda_2,0)$ that exists up to time $T$. Then $\E^x[e^{\langle\lambda,\xi_T\rangle}]<\infty$. Additionally, the affine transformation formula \eqref{affine_transformation} holds.
\end{enumerate}
Furthermore, the quantity $T(x):=\sup\{t\geq 0\colon\E[e^{\langle\lambda,\xi_t\rangle}]<\infty\}$ is the maximum lifetime of the solution $(v,w)$.
\end{Theorem}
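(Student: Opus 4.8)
The plan is to recognise the two-dimensional process $\xi=(X,Z)$, with $Z_t=\int_0^t X_s^x\,ds$, as an affine process on the canonical state space $\R_+\times\R$ (the $Z$-coordinate being the ``$\R$''-part, even though it here stays nonnegative) and to run the standard semimartingale/localisation argument, the one genuinely delicate point being the passage from a local martingale to a true martingale. First I would compute the extended generator of $\xi$ on exponential test functions: since $dZ_t=X_t\,dt$ contributes only the additional drift $x\,u_2$, one obtains
\begin{displaymath}
 \mathcal{A}\,e^{u_1 x+u_2 z}=\bigl(F(u_1)+x\,(R(u_1)+u_2)\bigr)\,e^{u_1 x+u_2 z},
\end{displaymath}
which reads off the data $\bar F(u)=F(u_1)$, $\bar R(u)=(R(u_1)+u_2,\,0)$, and shows that the admissibility conditions of the affine framework are met. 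In particular the second Riccati component is constant, $v_2(t,\lambda)=\lambda_2$, so \eqref{appendix_riccati} collapses to the scalar equation $\partial_t v_1=R(v_1)+\lambda_2$ together with $\partial_t w=F(v_1)$, whose existence and blow-up theory is precisely Lemmas \ref{lemma: riccati global existence} and \ref{lemma: riccati solution explosion}. For $\lambda\le 0$ the whole statement reduces to the classical Duffie--Filipovi\'c--Schachermayer result, since then $\langle\lambda,\xi_t\rangle\le 0$ keeps all exponentials bounded by $1$; the content of the theorem lies in removing this sign restriction.

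For statement (2), fix $T$ and $\lambda$ and assume $(v,w)$ exists on $[0,T]$. On this interval set
\begin{displaymath}
 N_t=\exp\bigl(\langle\xi_t,v(T-t,\lambda)\rangle+w(T-t,\lambda)\bigr),\qquad t\in[0,T].
\end{displaymath}
Applying It\^o's formula and the Riccati equations, the finite-variation part of $N$ cancels identically, so $N$ is a nonnegative local martingale, hence a supermartingale, and $\E^x[e^{\langle\lambda,\xi_T\rangle}]=\E^x[N_T]\le N_0=\exp(\langle x,v(T,\lambda)\rangle+w(T,\lambda))<\infty$, which already proves finiteness. The main obstacle is to upgrade this to the equality \eqref{affine_transformation}, i.e.\ to show that $N$ is a true martingale with no loss of mass. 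I would localise with $\tau_n=\inf\{t\ge 0:\ |\xi_t|\ge n\}$, so that $N_{\cdot\wedge\tau_n}$ is a genuine martingale with $\E^x[N_{T\wedge\tau_n}]=N_0$, and pass to $n\to\infty$; since $N_{T\wedge\tau_n}\to N_T$ almost surely, equality follows once $(N_{T\wedge\tau_n})_n$ is uniformly integrable. Uniform integrability is the crux: using continuity of $s\mapsto v(s,\lambda)$ on the compact $[0,T]$ and monotonicity of $v_1(\cdot,\lambda)$ one dominates $N$ along the path by $C_T\,e^{\langle\lambda^\ast,\xi\rangle}$ for a slightly enlarged frequency $\lambda^\ast$, and the flow property of the Riccati equation together with the standing assumption $\E^x[e^{\langle\lambda,\xi_T\rangle}]<\infty$ then supplies an integrable dominating variable. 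This is exactly the step that goes beyond Duffie--Filipovi\'c--Schachermayer and where the exponential moment hypothesis is genuinely used. Once $N$ is a martingale, $\E^x[N_T]=N_0$ gives \eqref{affine_transformation} at the given $x$; evaluating the resulting exponential-affine expression at an arbitrary starting point then extends the formula to all $x$.

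For statement (1) I would argue in the opposite direction. Assume $\E^{x_0}[e^{\langle\lambda,\xi_T\rangle}]<\infty$ for some $x_0$ in the interior of the state space, and let $[0,T_{\max})$ be the maximal existence interval of $(v,w)$. If $T_{\max}\le T$, then statement (2) applied on $[0,T_{\max}-\e]$ gives $\E^{x_0}[e^{\langle\lambda,\xi_{T_{\max}-\e}\rangle}]=\exp(\langle x_0,v(T_{\max}-\e,\lambda)\rangle+w(T_{\max}-\e,\lambda))$, and the explosion of the Riccati solution as $\e\searrow 0$ (cf.\ Lemma \ref{lemma: riccati solution explosion}) forces the right-hand side to $+\infty$; combining this with the Markov-property estimate $\E^{x_0}[e^{\langle\lambda,\xi_T\rangle}]\ge c_{T-t}\,\E^{x_0}[e^{\langle\lambda,\xi_t\rangle}]$ (valid since from any state $\xi$ stays in a neighbourhood with positive probability over a bounded time) yields $\E^{x_0}[e^{\langle\lambda,\xi_T\rangle}]=+\infty$, a contradiction. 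Hence $T_{\max}>T$, and \eqref{affine_transformation} holds on $[0,T]$ for all $x$ by statement (2); in particular neither the conclusion nor the finiteness depends on the chosen interior point $x_0$. Finally, the identification of $T(x)=\sup\{t\ge 0:\ \E^x[e^{\langle\lambda,\xi_t\rangle}]<\infty\}$ with the lifetime of $(v,w)$ is immediate: statement (2) gives ``$\ge$'', statement (1) gives ``$\le$'', and \eqref{affine_transformation} forces $\E^x[e^{\langle\lambda,\xi_t\rangle}]=+\infty$ once $t$ exceeds that lifetime.
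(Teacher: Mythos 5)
This statement is not proved in the paper at all: it appears in the appendix under the heading ``Results from \cite{keller-ressel_mayerhofer}'' and is imported as a black box (it is exactly what is invoked in the proof of Proposition 3.5). So there is no in-paper argument to compare yours against; what can be assessed is whether your sketch would actually constitute a proof. Your overall strategy is the right one and is indeed the one used in the cited reference: identify $\xi=(X,Z)$ as an affine process, reduce the two-dimensional Riccati system to the scalar equation $\partial_t v_1=R(v_1)+\lambda_2$, introduce the space--time harmonic exponential $N_t=\exp(\langle\xi_t,v(T-t,\lambda)\rangle+w(T-t,\lambda))$, get finiteness from the nonnegative-local-martingale/supermartingale bound, and obtain the converse direction from blow-up of the Riccati solution.

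The genuine gap is the step you yourself flag as the crux: upgrading $N$ from a supermartingale to a true martingale. Your proposed uniform-integrability argument does not work as stated. First, it is circular: inside statement (2) you invoke ``the standing assumption $\E^x[e^{\langle\lambda,\xi_T\rangle}]<\infty$'', but in statement (2) that is the conclusion, and the supermartingale bound only gives the one-sided inequality $\E^x[N_T]\le N_0$, which is useless for uniform integrability of $(N_{T\wedge\tau_n})_n$. Second, a pathwise domination $N_{t\wedge\tau_n}\le C_T\,e^{\langle\lambda^\ast,\xi_{t\wedge\tau_n}\rangle}$ with a strictly enlarged frequency $\lambda^\ast$ would require an integrable bound on $\sup_{t\le T}e^{\langle\lambda^\ast,\xi_t\rangle}$, i.e.\ exponential moments of a \emph{higher} order, uniformly in time --- precisely the kind of statement the theorem is meant to establish, and not available from the hypotheses. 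The reference avoids this by a different mechanism (approximation from frequencies $\lambda'$ where the classical Duffie--Filipovi\'c--Schachermayer theory applies, combined with monotone convergence, the flow/semigroup property of the Riccati solutions, and lower semicontinuity of $\lambda\mapsto\E[e^{\langle\lambda,\xi_t\rangle}]$). A secondary soft spot of the same kind occurs in your statement (1): the inequality $\E^{x_0}[e^{\langle\lambda,\xi_T\rangle}]\ge c_{T-t}\,\E^{x_0}[e^{\langle\lambda,\xi_t\rangle}]$ (monotonicity in $t$ of the finiteness of exponential moments) needs a uniform-in-state lower bound on $y\mapsto\E^{(y,0)}[e^{\langle\lambda,\xi_{T-t}\rangle}]$ over the unbounded range of $\xi_t$; ``staying in a neighbourhood with positive probability'' does not deliver that. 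As written, the proposal is a correct roadmap of the known proof but leaves its two hard estimates unproven.
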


\begin{Remark}
To analyse the integrated CBI process, denoted by $Z$ in the above theorem, we consider the initial conditions $x=(x_1,0)$ as well as $\lambda=(0,\lambda_2)$.
\end{Remark}

\section{Boundedness of moments}

\begin{Lemma}\label{lemma: moment formula}
 Let $X$ be a CBI process with parameters $(b,\beta, \sigma, \nu, \mu)$ such that $\int_1^{\infty}z \nu(dz) < \infty$ holds. Then
 \[
  \E[X_t\ | \ X_s ] = X_s e^{\beta(t-s)} + m\left(1 - e^{\beta(t-s)}\right)
 \]
 holds for all $0 \leq s \leq t$.
\end{Lemma}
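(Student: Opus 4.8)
The plan is to reduce the conditional statement to an unconditional first‑moment identity and to establish the latter by differentiating the affine transformation formula \eqref{eq: affine formula} at $\lambda=0$. By the Markov property and time‑homogeneity of the CBI process one has $\E[X_t\mid X_s] = h(t-s,X_s)$, where $h(r,y):=\E[X_r^y]$; hence it suffices to prove that $h(r,x) = x\,e^{\beta r} + m(1-e^{\beta r})$ for all $r\ge 0$ and $x\ge 0$.

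To compute $h(r,x)$ I would consider $\phi(r,\lambda):=\E[e^{\lambda X_r^x}]$ for $\lambda\le 0$, which by \eqref{eq: affine formula} equals $\exp\!\bigl(x\,v(r,\lambda)+\int_0^r F(v(s,\lambda))\,ds\bigr)$. Under the standing integrability condition $\int_0^\infty(z\wedge z^2)\,\mu(dz)<\infty$ the branching mechanism $R$ is continuously differentiable on $(-\infty,0]$, so the solution $v$ of the generalized Riccati equation depends $C^1$‑smoothly on $\lambda$ near $0$. Since $v(\cdot,0)$ solves $v'=R(v)$, $v(0,0)=0$ with $R(0)=0$, uniqueness forces $v(\cdot,0)\equiv 0$, whence $\psi(r):=\partial_\lambda v(r,0)$ satisfies the variational equation $\psi'=R'(0)\psi=\beta\psi$, $\psi(0)=1$, i.e. $\psi(r)=e^{\beta r}$. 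Differentiating the right‑hand side of \eqref{eq: affine formula} in $\lambda$ by the chain rule gives $\partial_\lambda\phi(r,\lambda) = \phi(r,\lambda)\bigl(x\,\partial_\lambda v(r,\lambda) + \int_0^r F'(v(s,\lambda))\,\partial_\lambda v(s,\lambda)\,ds\bigr)$, and letting $\lambda\uparrow 0$ — so that $\phi(r,\lambda)\to 1$, $\partial_\lambda v(s,\lambda)\to e^{\beta s}$ uniformly in $s\in[0,r]$, and $F'(v(s,\lambda))\to F'(0) = b+\int_0^\infty z\,\nu(dz)$ (finite precisely by the hypothesis $\int_1^\infty z\,\nu(dz)<\infty$ together with $\int_0^\infty(1\wedge z)\,\nu(dz)<\infty$) — yields
\[
 \lim_{\lambda\uparrow 0}\partial_\lambda\phi(r,\lambda) = x\,e^{\beta r} + \int_0^r F'(0)\,e^{\beta s}\,ds .
\]
On the other hand $\partial_\lambda\phi(r,\lambda)=\E\bigl[X_r^x e^{\lambda X_r^x}\bigr]$ for $\lambda<0$, which increases to $\E[X_r^x]$ as $\lambda\uparrow 0$ by monotone convergence. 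Equating the two limits shows at once that $\E[X_r^x]<\infty$ and, using $F'(0)=-\beta m$ (which is just the definition of $m$),
\[
 h(r,x) = x\,e^{\beta r} + F'(0)\,\frac{e^{\beta r}-1}{\beta} = x\,e^{\beta r} + m\,(1-e^{\beta r}),
\]
which together with the first paragraph proves the lemma.

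A purely probabilistic alternative is to take expectations in the stochastic equation \eqref{SDE}: localizing with $\tau_n:=\inf\{t:X_t^x\ge n\}$ removes the Brownian and compensated‑jump integrals, giving $\E[X_{t\wedge\tau_n}^x]\le x + |\widehat b|\,t + |\beta|\int_0^t\E[X_{s\wedge\tau_n}^x]\,ds$ with $\widehat b:=b+\int_0^\infty z\,\nu(dz)$; Gronwall's inequality provides an $n$‑uniform bound, Fatou's lemma gives $\E[X_t^x]<\infty$, and letting $n\to\infty$ in the stopped equation shows that $g(t):=\E[X_t^x]$ solves the linear ODE $g'(t)=\widehat b+\beta g(t)$, $g(0)=x$, whose solution is again $x\,e^{\beta t}+m(1-e^{\beta t})$.

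The only real obstacle in either route is the analytic bookkeeping that legitimizes the interchange of limit (or differentiation) with expectation and, equivalently, furnishes the a priori finiteness of the first moment; the hypothesis $\int_1^\infty z\,\nu(dz)<\infty$ is exactly what makes $F'(0)$ finite and closes this gap. I would favour the affine‑formula argument, because the one‑sided passage $\lambda\uparrow 0$ combined with monotone convergence bypasses any delicate domination estimate and delivers both the finiteness and the explicit value in a single step.
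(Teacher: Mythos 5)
Your proposal is correct, and it actually contains the paper's proof as its ``alternative'': the paper simply takes expectations in \eqref{SDE} (the martingale terms vanish, the uncompensated $N_\nu$-integral contributes $t\int_0^\infty z\,\nu(dz)$), solves the linear ODE $g'=\widehat b+\beta g$, $g(0)=x$, citing \cite{FJR2020b} for the details, and then invokes the Markov property exactly as in your first paragraph; the localization/Gronwall/Fatou bookkeeping you sketch is what that citation hides. Your preferred route --- differentiating the affine transformation formula \eqref{eq: affine formula} at $\lambda=0$ --- is genuinely different and also sound: $v(\cdot,0)\equiv 0$, the variational equation gives $\partial_\lambda v(r,0)=e^{\beta r}$, and the one-sided limit $\lambda\uparrow 0$ combined with monotone convergence of $\E[X_re^{\lambda X_r}]$ neatly delivers both the finiteness of the first moment and its value, with $F'(0)=b+\int_0^\infty z\,\nu(dz)=-\beta m$ closing the computation. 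What the analytic route buys is independence from the pathwise construction (it works directly from the transition semigroup and needs no stochastic integration or stopping-time argument); what it costs is the justification of smooth dependence of $v(t,\lambda)$ on $\lambda$ near $0^-$ and of the interchange of $\partial_\lambda$ with the time integral of $F(v(s,\lambda))$, which you correctly flag and which is where the hypothesis $\int_1^\infty z\,\nu(dz)<\infty$ enters in either approach. Either write-up would be acceptable; the paper's choice is the shorter one given that \eqref{SDE} is already its working definition of the process.
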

\begin{proof}
 Recall that by taking expectations in \eqref{SDE} and solving the corresponding ODE, we obtain 
 $\E[X_t] = \Big(x+\frac{\hat{b}}{\beta}\Big)e^{\beta t}-\frac{\hat{b}}{\beta} = xe^{\beta t} + m (1-e^{\beta t})$, see e.g. \cite{FJR2020b}. The conditional first moment can be now deduced from the the Markov property.
\end{proof}

\begin{Proposition}\label{prop: moment bounds}
 Let $X$ be a subcritical CBI process with parameters $(b,\beta, \sigma, \nu, \mu)$. Then the following assertions hold:
 \begin{enumerate}
     \item[(a)] If $\int_1^{\infty}z \nu(dz) < \infty$ holds, then
     \[
      \sup_{t \geq 0}\E[X_t^x] \leq \max\{x, m\}.
     \]
     
     \item[(b)] If \eqref{eq: second moments} holds, then
     \[
      \sup_{t \geq 0}\E[(X_t^x)^2] < \infty.
     \]
 \end{enumerate}
\end{Proposition}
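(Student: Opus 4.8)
The plan is to treat the two assertions separately; the first is essentially the content of Lemma \ref{lemma: moment formula}, while the second requires an It\^o/Gr\"onwall argument.

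For part (a), I would take expectations in \eqref{SDE} and solve the resulting linear ODE, as in Lemma \ref{lemma: moment formula}, to obtain $\E[X_t^x] = xe^{\beta t} + m(1 - e^{\beta t})$; the hypothesis $\int_1^\infty z\,\nu(dz) < \infty$ is exactly what makes $m$ (and hence this identity) finite. Since subcriticality gives $\beta < 0$ and therefore $e^{\beta t} \in (0,1]$, the right-hand side is a convex combination of $x$ and $m$ and so is bounded by $\max\{x,m\}$ uniformly in $t \geq 0$.

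For part (b), the idea is to apply It\^o's formula to $X_t^2$ along \eqref{SDE}. Writing $c_1 = \int_0^\infty z\,\nu(dz)$, $c_2 = \int_0^\infty z^2\,\nu(dz)$ and $c_\mu = \int_0^\infty z^2\,\mu(dz)$ --- all finite by \eqref{eq: second moments} together with the standing integrability conditions on $\nu,\mu$ --- this gives
\begin{align*}
 X_t^2 &= x^2 + \int_0^t \left( 2\beta X_s^2 + (2b + \sigma^2 + 2c_1) X_s \right) ds + \int_0^t 2\sigma X_s^{3/2}\, dB_s
 \\ &\quad + \int_0^t \int_0^\infty 2 X_{s-} z\, \widetilde N_\nu(ds,dz) + \int_0^t \int_0^\infty \int_0^{X_{s-}} 2 X_{s-} z\, \widetilde N_\mu(ds,dz,du)
 \\ &\quad + \int_0^t \int_0^\infty z^2 N_\nu(ds,dz) + \int_0^t \int_0^\infty \int_0^{X_{s-}} z^2 N_\mu(ds,dz,du).
\end{align*}
I would localise with $\tau_n = \inf\{t \geq 0 \colon X_t \geq n\}$. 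On $[0, t\wedge\tau_n]$ the three compensated integrals are genuine $L^2$-martingales (their predictable quadratic variations involve only $X$ bounded by $n$ together with the finite constants $c_1,c_2,c_\mu$), hence have zero expectation, whereas the last two, \emph{uncompensated} Poisson integrals have expectations $c_2\,\E[t\wedge\tau_n]$ and $c_\mu\,\E\int_0^{t\wedge\tau_n} X_s\,ds$ by Tonelli's theorem for Poisson random measures. It is crucial here to keep these two terms uncompensated: passing to their compensated versions would spuriously require finite fourth moments of $\nu$ and $\mu$. Taking expectations and invoking part (a) to bound $\E[X_s] \leq K := \max\{x,m\}$ gives, with $g_n(t) := \E[X_{t\wedge\tau_n}^2]$,
\[
 g_n(t) = x^2 + 2\beta\, \E\int_0^{t\wedge\tau_n} X_s^2\, ds + (2b+\sigma^2+2c_1+c_\mu)\, \E\int_0^{t\wedge\tau_n} X_s\, ds + c_2\, \E[t\wedge\tau_n].
\]
Letting $n \to \infty$ --- using monotone convergence on the three integrals and $g_n(t) \to g(t) := \E[X_t^2]$ --- produces an integral identity for $g$ which, differentiated in $t$, yields the linear differential inequality $g'(t) \leq 2\beta g(t) + C$ with $C := (2b+\sigma^2+2c_1+c_\mu)K + c_2$. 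Since $2\beta < 0$, Gr\"onwall's lemma gives $g(t) \leq x^2 e^{2\beta t} + \frac{C}{2|\beta|}(1 - e^{2\beta t}) \leq \max\{x^2,\, C/(2|\beta|)\}$ for all $t \geq 0$, which is the claim.

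The main obstacle will be the passage to the limit $n\to\infty$, i.e.\ removing the localisation cleanly. Here I would first obtain from the displayed identity the a priori bound $\E\int_0^t g_n(s)\,ds < \infty$, then plug \eqref{SDE} into $\E[X_t^2]$ and combine with $\int_0^\cdot g(s)\,ds < \infty$ to upgrade this to $g(t) < \infty$ for every $t$. Uniform integrability of $(X_{t\wedge\tau_n}^2)_n$, needed for $g_n(t)\to g(t)$, then follows from $\E[\sup_{s\leq t} X_s^2] < \infty$ --- obtained by Doob's and the Burkholder--Davis--Gundy inequalities applied to the martingale part of \eqref{SDE}, again using only $c_1,c_2,c_\mu < \infty$ --- and from $\E[\sum_{s\leq t}(\Delta X_s)^2] = \E\int_0^t(c_2 + c_\mu X_s)\,ds < \infty$, via the domination $X_{t\wedge\tau_n}^2 \leq X_t^2 + 2\sup_{s\leq t} X_s^2 + 2\sum_{s\leq t}(\Delta X_s)^2$ on $\{\tau_n < t\}$ together with $\P[\tau_n < t]\to 0$. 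The remaining steps are routine.
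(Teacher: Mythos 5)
Your proof is correct, but part (b) follows a genuinely different route from the paper's. The paper does not apply It\^o's formula to $X_t^2$; instead it writes $X_t^x$ in variation-of-constants (mild) form,
$X_t^x = e^{\beta t}x + m(1-e^{\beta t}) + \sigma\int_0^t e^{\beta(t-s)}\sqrt{X_s^x}\,dB_s + \cdots$,
squares, and applies the It\^o isometry to each compensated stochastic integral. Because the integrands there involve $X_s$ only to the first power (through $\sqrt{X_s}$ and the indicator $\1_{\{u\leq X_{s-}\}}$), the isometry produces terms of the form $\int_0^t e^{2\beta(t-s)}\E[X_s^x]\,ds$, which are bounded uniformly in $t$ by part (a) together with $\int_0^\infty e^{2\beta r}\,dr = \tfrac{1}{2|\beta|}$ --- no localisation, no Gr\"onwall, and no uniform-integrability argument are needed. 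Your It\^o-plus-Gr\"onwall argument reaches the same conclusion and is standard: the expansion of $X_t^2$ is computed correctly, and your decision to keep the pure $z^2$ Poisson integrals uncompensated (so that only second moments of $\nu,\mu$ enter) is exactly right. The price, as you correctly identify, is that you must localise (the integrands $2X_{s-}z$ and the drift term $2\beta X_s^2$ are not a priori integrable) and then justify $\E[X_{t\wedge\tau_n}^2]\to\E[X_t^2]$; your sketched delocalisation (Fatou for finiteness of $\E[X_t^2]$, then $\E[\sup_{s\leq t}X_s^2]<\infty$ via Doob/BDG for uniform integrability) is workable, but it is precisely the technical overhead that the paper's mild-formulation trick sidesteps.
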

\begin{proof}
 (a) The assertion follows from 
 \[
  0 \leq \E[X_t^x] = xe^{\beta t} + m(1-e^{\beta t}) \leq \max\{x, m\}.
 \]
 (b) For the second assertion, we use the same representation as in \cite{FJR2020b} to write
 \begin{align*}
     X_t^x &= e^{\beta t}x + m\left( 1- e^{\beta t}\right) + \sigma \int_0^t e^{\beta (t-s)}\sqrt{X_s^x}dB_s
     \\ &\qquad + \int_0^t \int_0^{\infty}\int_0^{\infty} e^{\beta(t-s)}z\1_{\{ X_{s-}\leq u\}} \widetilde{N}_{\mu}(ds,dz,du)
     + \int_0^t \int_0^{\infty}e^{\beta(t-s)}z \widetilde{N}_{\nu}(ds,dz).
 \end{align*}
 Thus we find for some generic constant $C > 0$
 \begin{align*}
     \E[(X_t^x)^2] &\leq C\left( e^{\beta t}x + m\left( 1- e^{\beta t}\right) \right)^2
     + C \E\left[ \left| \sigma \int_0^t e^{\beta(t-s)}\sqrt{X_s^x}dB_s \right|^2 \right]
     \\ &\qquad + C\E\left[ \left| \int_0^t \int_0^{\infty}\int_0^{\infty} e^{\beta(t-s)}z\1_{\{ X_{s-}\leq u\}} \widetilde{N}_{\mu}(ds,dz,du) \right|^2 \right]
     \\ &\qquad + C\E\left[\left| \int_0^t \int_0^{\infty}e^{\beta(t-s)}z \widetilde{N}_{\nu}(ds,dz) \right|^2 \right] 
     \\ &\leq C + C \int_0^t e^{2\beta(t-s)}\E[X_s^x]ds + C\int_0^t \int_0^{\infty} z^2 e^{2\beta(t-s)} \E[X_s^x] ds \mu(dz)
     \\ &\qquad + C\int_0^t \int_0^{\infty} z^2 e^{2\beta(t-s)}\E[X_s^x] ds \nu(dz)
     \\ &\leq C\left( 1  + \int_0^{\infty}z^2 \mu(dz) + \int_0^{\infty} z^2 \nu(dz) \right) < \infty.
 \end{align*}
\end{proof}

\bibliographystyle{amsplain}
\phantomsection\addcontentsline{toc}{section}{\refname}\bibliography{Bibliography}

\end{document}